\newtheorem{theorem}{Theorem}[section]
\newtheorem{definition}[theorem]{Definition}
\newtheorem{proposition}[theorem]{Proposition}
\newtheorem{corollary}[theorem]{Corollary}
\newtheorem{lemma}[theorem]{Lemma}
\newtheorem{problem}[theorem]{Open problem}
\newtheorem*{remark}{Remark}
\newtheorem*{claim}{Claim}
\def\medskip
  \def\svgwidth{#}
  \def\svgwidth{#1}
\def\centerby#1#2{%
  \setbox0=\hbox{#1}%
  \hbox to \wd0{\hss#2\hss}%
}
\newcommand{\relpower}[2]{#1^{\circ #2}}
\newcommand{\tuple}[1]{\mathbf{#1}}
\newcommand{\variety}[1]{\mathcal{#1}}
\newcommand{\relstr}[1]{\mathbb{#1}}
\newcommand{\str}[1]{\mathbf{#1}}
\DeclareMathOperator{\Ar}{Ar}
\newcommand{\TC}[1]{\mathcal{#1}}
\begin{document}

\title%[Weakest idempotent equations]
      {The weakest nontrivial idempotent equations}

\author{Miroslav Ol\v s\'ak}
%\date{\today}

\maketitle

\begin{abstract}
An equational condition is a set of equations in an algebraic language, and an algebraic structure satisfies such a condition if it possesses terms that meet the required equations. We find a single nontrivial equational condition which is implied by \emph{any nontrivial idempotent} equational condition.
\end{abstract}

\section{Introduction}

The main result of this paper shows a surprising fact that there is a weakest nontrivial idempotent equational condition for general algebras. In this section we first explain the result in more detail, then discuss the background and motivation, and finally outline the rest of the paper.

\subsection*{Main result}

This paper concerns structures with purely algebraic
signature, so by a \emph{signature} $\Sigma$ we mean a set of operation symbols with associated finite arities; the arity of $f \in \Sigma$ is denoted $\Ar{f}$.
An \emph{algebra} $\str{A}$ of a signature $\Sigma$ consists of a set $A$, called the \emph{universe}, and, for each $f \in \Sigma$, an operation $f^{\str{A}}:A^{\Ar{f}} \to A$ on $A$, called the \emph{basic operation}. 
Each term $t$ in the signature $\Sigma$ over a linearly ordered finite set of variables naturally determines a \emph{term operation} $t^{\str A}$ of $\str A$. 
%The set of variables and its linear ordering will be clear from the context?
%The signature is often understood from the context and then we  do not mention it explicitly in such a case.

The next definitions will be illustrated using several terms in a signature that includes a binary operation symbol $\cdot$ and a unary operation symbol $^{-1}$. 
\[
t_1(x,y,z) = (xy)z,\quad
  t_2(x,y,z) = x(yz),\quad
  t_3(x,y,z) = (xy^{-1})z,\quad
%  b_1(x,y)   = xy,\quad
  t_4(x,y)   = x.
\]
Note that $t_4$ is also a term in the empty signature and $t_4^{\str A}$ is the binary projection onto the first coordinate. In fact, every term in the empty signature is a variable and the corresponding term operation is a projection.

An \emph{equation} (over a fixed signature) is a pair of terms $s$ and $t$, written $s \approx t$. An algebra $\str{A}$ \emph{satisfies} such an equation if the two terms are evaluated to the same element of $A$ for every evaluation of variables. In other words, $s \approx t$ if  $s^{\str{A}} = t^{\str{A}}$ (for an arbitrary linear ordering of variables). 
For example, every group satisfies the equation $t_1 \approx t_2$.  

An equational condition, informally, stipulates the existence of terms satisfying specified system of equations. More formally,
an \emph{equational condition} $\TC{S}$ is a system of equations in some signature, say $\Delta$. An algebra $\str{A}$ of an arbitrary signature $\Sigma$ is said to \emph{satisfy} $\TC{S}$ if all the equations involved in $\TC{S}$ are satisfied in $\str{A}$ after replacing each operation symbol in $\Delta$ by a term in the signature $\Sigma$. For example, the existence of a \emph{Maltsev term}, ie. the equational condition consisting of two equations
\[
m(x,x,y) \approx y \approx m(y,x,x)
\]
is satisfied in every group, because $m=t_3$ satisfies these equations.

An uninteresting equational condition is the existence of an associative binary operation, ie. the equational condition consisting of a single equation
\[
n(n(x,y),z) \approx n(x,n(y,z)),
\]
because it is satisfied in every algebra by putting $n=t_4$. 
In general,
an equational condition is called \emph{trivial} if it is satisfied in every algebra, equivalently, in an algebra in the empty signature with at least two elements. An example of a nontrivial equational condition is the existence of a Maltsev term since neither of the choices $m(x,y,z)=x$, nor $y$, nor $z$, makes both of the equations true. 

Finally, an equational condition is \emph{idempotent} if, for each operation symbol $f$ appearing in the condition, the \emph{idempotency}, ie.\ the equation $f(x,x, \dots, x) \approx x$, is a consequence of the defining equations. For example, the existence of a Maltsev term is an idempotent term condition, while the associativity is not. 

The main result of this paper shows that there is a weakest nontrivial idempotent equational condition. In fact, several such weakest term conditions are given in Theorem~\ref{thm:equiv}, one of which is stated in the following theorem.

\begin{theorem} \label{thm:intro_main}
The following are equivalent for every algebra $\str{A}$.
\begin{enumerate}
\item $\str{A}$ satisfies a nontrivial idempotent equational condition;
\item $\str{A}$ has a $6$-ary idempotent term $t$ satisfying the  equations
\[
t(xyy,yxx) \approx
 t(yxy,xyx) \approx
 t(yyx,xxy)\,.
 \]
\end{enumerate}
(The variables are grouped together for better readability.)
\end{theorem}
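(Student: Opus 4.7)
The easy direction (2)$\Rightarrow$(1) amounts to verifying that the equational condition in~(2) is itself a nontrivial idempotent condition. Idempotency is explicit. For nontriviality I would check that no projection $\pi_i$ makes all three equations simultaneously hold: as $i$ ranges over $\{1,\dots,6\}$, the $i$-th coordinates of the three 6-tuples
\[
(x,y,y,y,x,x),\quad (y,x,y,x,y,x),\quad (y,y,x,x,x,y)
\]
exhaust the six non-constant triples of $\{x,y\}^3$, each of which contains both $x$ and $y$; hence no projection can produce three equal outputs in the empty-signature two-element algebra.

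For the main implication (1)$\Rightarrow$(2), I would first translate~(2) into a question about a free algebra. Let $\str F$ be the free algebra on two generators $x,y$ in the variety generated by the idempotent reduct of $\str A$. Condition~(2) holds in $\str A$ precisely when the three elements $t(x,y,y,y,x,x)$, $t(y,x,y,x,y,x)$, $t(y,y,x,x,x,y) \in \str F$ coincide for some idempotent 6-ary term $t$. Equivalently, the subalgebra $S$ of $\str F^3$ generated by the six non-constant triples --- viewed as the columns of the $3 \times 6$ matrix whose rows are the three 6-tuples displayed above --- meets the diagonal $\Delta = \{(d,d,d) : d \in F\}$. The task thus reduces to showing $S \cap \Delta \neq \emptyset$ under hypothesis~(1).

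To prove this I would invoke the Taylor--Siggers machinery for idempotent clones: by Taylor's characterisation, strengthened by Siggers, hypothesis~(1) guarantees a 6-ary idempotent term $s$ of $\str A$ satisfying a single two-sided ``Siggers identity''. From $s$ I would construct the required term $t$ by substituting carefully chosen binary terms (including iterated copies of $s$ itself) into the arguments of $s$, so that the single Siggers identity propagates into the three-fold Ol\v s\'ak identity. The main obstacle is precisely this combinatorial construction: the three 6-tuples in~(2) form the orbit of $(x,y,y,y,x,x)$ under the coordinate permutation $(1\,2\,3)(4\,5\,6)$, giving the condition a cyclic three-fold symmetry, whereas the Siggers identity is merely a two-way equality. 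The construction must therefore encode the third equality as a consequence of the first two by means of a carefully designed substitution pattern that respects the cyclic symmetry. Once the correct substitution is identified, checking that the resulting term is idempotent and satisfies the three equations should reduce to a routine term-algebra calculation modulo the Siggers identity.
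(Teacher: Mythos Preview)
Your treatment of the easy direction (2)$\Rightarrow$(1) is fine, and the reformulation of (1)$\Rightarrow$(2) via the diagonal of $\str F^3$ is correct and matches the spirit of the paper's free-algebra arguments.

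The genuine gap is in the sentence ``by Taylor's characterisation, strengthened by Siggers, hypothesis~(1) guarantees a 6-ary idempotent term $s$ of $\str A$ satisfying a single two-sided Siggers identity.'' Siggers' strengthening of Taylor's theorem is a result about \emph{finite} algebras; it is false in general. The paper states this explicitly as Theorem~\ref{thm:kazda} (Kazda): there is an infinite idempotent algebra with a ternary WNU term---hence Taylor---that satisfies no nontrivial strong Maltsev condition consisting of a single linear equation, so in particular no Siggers identity. Your proposed plan therefore cannot get off the ground for infinite $\str A$, which is exactly the case the theorem is about. (Even granting a Siggers term, you acknowledge that producing the three-fold cyclic identity from a single two-sided one is ``the main obstacle'' and you do not give the substitution; but this secondary difficulty is moot once the first step fails.)

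The paper's route is entirely different and does not pass through any single-equation condition. Starting only from an arbitrary Taylor term, it first proves an infinite \emph{loop lemma}: a symmetric relation with an odd cycle that absorbs $A^2$ with respect to an idempotent operation must contain a loop (Theorem~\ref{thm:loop} via Lemma~\ref{lem:real_loop}). This is then leveraged into a \emph{double loop lemma} (Theorem~\ref{thm:double_loop}), producing a constant tuple in a carefully chosen subalgebra of $\str A^2\times\str B^2$; the term witnessing that constant is a $12$-ary ``double loop term''. Only then, by further term manipulations in Theorem~\ref{thm:equiv} (the implications $(2)\Rightarrow(3)\Rightarrow(4)$, including the nontrivial Claim on p.~\pageref{idempotency strength} that exploits idempotency alone), does one arrive at the $6$-ary weak $3$-cube term of item~(2). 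The key idea you are missing is precisely this loop-lemma machinery, which replaces the finite combinatorics behind Siggers' theorem with an absorption argument that works over arbitrary sets.
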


Note that the displayed equational condition is nontrivial, so only the implication ``1 $\Rightarrow$ 2'' is interesting.

\subsection*{Background}

A central concept in universal algebra is a \emph{variety} -- a class of all algebras of a fixed signature that satisfy a given set of equations.
A variety is said to satisfy an equational condition $\TC{S}$ if all its members do (and then the terms satisfying $\TC{S}$ can be chosen uniformly for all algebras in the variety). We remark that 
 there is no essential difference when ``algebra $\str{A}$'' is replaced by ``variety $\variety{V}$'' in the statement of Theorem~\ref{thm:intro_main} since the two versions of the theorem are equivalent by basic universal algebraic results.

Of particular importance  are equational conditions involving finitely many equations, so called \emph{strong Maltsev conditions}, and their countable disjunctions, so called \emph{Maltsev conditions}, since they often characterize structural properties of varieties. The terminology comes from the first characterization of this sort due to A. I. Maltsev (see~\cite{Bergman}) who proved that a variety is congruence permutable (ie., any two congruences in any algebra from $\variety{V}$ permute) if and only if $\variety{V}$ has a Maltsev term.
Another classic Maltsev conditions are those for congruence distributivity due to B. J\'onsson and congruence modularity due to A. Day (see~\cite{Bergman}).

Equational conditions can be preordered by their strength: $\TC{S}$ is weaker than $\TC{T}$, set
$\TC{S} \leq \TC{T}$, if every algebra that satisfies $\TC{T}$ also satisfies $\TC{S}$. By identifying conditions of equal strength, we get a lattice isomorphic to the \emph{lattice of interpretability types of varieties}~\cite{GarciaTaylor}. The main result can be interpreted in its sublattice formed by the idempotent equational conditions: the bottom element (corresponding to the trivial conditions) has a unique upper cover. This is in contrast to the situation in the whole interpretability lattice. W. Taylor~\cite{Taylor88} proved that the bottom element has no cover at all and a general non-covering result was given by R. McKenzie and S. Swierczkowski~\cite{Noncovering}. The first example of a covering in the lattice is due to R. McKenzie~\cite{Covering} who proved that the equations defining Boolean algebras determine a equational condition with a unique upper cover. 

The restriction to idempotent conditions in Theorem~\ref{thm:intro_main}, which is necessary by the mentioned result of W. Taylor, is also quite natural. One reason is that most of the useful Maltsev conditions  are idempotent, including the conditions for congruence permutability, distributivity, and modularity.
Although our results give nontrivial information for some of these conditions, there are several motivations to investigate the algebras  satisfying some nontrivial idempotent equational condition in general.

One of the early appearances of such algebras is in the work of 
W. Taylor~\cite{Taylor77} who studied how equations satisfied by a topological algebra influence group equations obeyed by its homotopy group. One of his results is, roughly, that an equational condition implies some nontrivial group equation if and only if it implies the commutativity of the homotopy groups, and this happens if and only if the equational condition implies a nontrivial idempotent one. A characterization of algebras satisfying a nontrivial idempotent equational condition (see Section~\ref{sect:taylor}), which Taylor gave as a corollary of his results, was later used frequently and is used in this paper as well. 
This motivates the following definition.

\begin{definition}
An algebra is called \emph{Taylor} if it satisfies a nontrivial idempotent equational condition.
\end{definition}

Another significant appearance of Taylor algebras is in the Tame Congruence Theory (TCT) of D. Hobby and R. McKenzie~\cite{HobbyMcKenzie}. The TCT is a structure theory of finite algebras that recognizes 5 types of local behaviors in an algebra and gives ways to deduce global properties from the local ones. The worst, least structured type of behavior is the ``unary type'' and there is a strong correlation of omitting this type and idempotent equations: a finite algebra $\str A$ is Taylor if and only if all finite algebras in the variety generated by $\str A$ omit the unary type. 

A more recent strong motivation to study Taylor algebras in general comes from the fixed--template  Constraint Satisfaction Problem (CSP). The CSP over a  relational structure $\relstr{A}$ (called the template) is a computational problem that asks whether an input primitive positive sentence in the language of $\relstr{A}$ is true in $\relstr{A}$. A lot of recent attention is devoted to understanding how the computational or descriptive complexity of the CSP depends on the relational structure, see~\cite{CSPSurvey} for a recent survey.

For relational structures with finite universes there is a tight connection between the  complexity of the associated CSP and equational conditions for algebras. Namely, the complexity of the CSP over a finite $\relstr{A}$ is fully determined by the equational conditions satisfied by the so called \emph{algebra of polymorphism}, whose basic operations are all the homomorphisms from cartesian powers of $\relstr{A}$ to $\relstr{A}$. Moreover, without loss of generality, it is possible to consider only those structures whose associated algebra $\str A$ is idempotent. Under this assumption, it is known that the CSP is NP--complete whenever $\str A$ is not Taylor and the \emph{algebraic dichotomy conjecture}~\cite{BJK}, confirmed in many special cases, states that the CSP is otherwise solvable in polynomial time. An intensive research motivated by this conjecture has brought a number of strong characterizations of finite Taylor algebras, including the equational condition given by M. Siggers~\cite{Siggers}. refined by K. Kearnes, P. Markovi\'c, and R. McKenzie~\cite{OptimalStrong}: Finite $\str A$ is Taylor if and only if $\str A$ has a $4$-ary idempotent term $s$ satisfying the equation
\[
s(r,a,r,e) \approx s(a,r,e,a)\,.
\]
The fact that there is a weakest idempotent equational condition for finite algebras was  unexpected and possible extension to infinite algebras was not considered until much later, in the context of infinite domain CSPs. 

The CSP over infinite relational structures is also an active research area, see \cite{BodirskySurvey,Bodirsky-HDR-v5} for a survey. In particular,  M. Bodirsky and M. Pinsker (see~\cite{BPP-projective-homomorphisms}) extended the dichotomy conjecture to  a certain class of infinite structures. However, their dividing line involves both equational and topological properties of the polymorphism algebra, which brought the question whether the topological structure is essential in their criterion. 
During the Banff workshop ``Algebraic and Model Theoretical Methods in Constraint Satisfaction'', November 2014, various versions of this problem were discussed and a ``solution'' to one of them emerged from the discussions depending on the ``obvious fact'' that there is no weakest nontrivial equational condition for idempotent algebras. Filling in this gap turned out to be more complex than expected, however, some partial results were obtained, for instance, A. Kazda observed that the rare--area term is not the weakest one in general (see Theorem~\ref{thm:kazda}).  We remark that the original problem, Question 1.3. in~\cite{BPP-projective-homomorphisms}, remains open.
On the other hand, it was proved by L. Barto and M. Pinsker~\cite{BartoPinsker} that the topological structure is indeed irrelevant in the Bodirsky--Pinsker dichotomy conjecture.
An intermediate problem, a ``loop lemma for near unanimity'', which they considered while working on the result, turned out to have positive answer that requires no additional algebraic or topological assumptions. This fact evolved into the main result of the present paper and actually forms a significant part of the proof. 

There does not seem to be any immediate application of the results of this paper to the CSP. However, we believe that the ideas will be useful to address some of these problems, such as those in~\cite{BartoPinsker}.

%book ads, or more recently, jonsson ????

\subsection*{Outline}

After the preliminaries in Section~\ref{sect:prelim}, we state in Section~\ref{sect:taylor} the mentioned Taylor's characterization of Taylor algebras, discuss further characterizations known in the finite word, and show the difficulties when going infinite. Some of the characterizations of finite Taylor algebras are based on ``loop lemmata'', certain results of combined graph theoretic and algebraic flavor.  An infinite loop lemma is given in Section~\ref{sect:loop}. This loop lemma is then used to prove a ``double loop lemma'' in Section~\ref{sect:double_loop} and a weakest idempotent equational condition is derived as a consequence. Next, in Section~\ref{sect:equiv}, we give a number of equivalent conditions, including the one stated in Theorem~\ref{thm:intro_main}. We finish by discussing open problems in Section~\ref{sect:open}.

\section{Preliminaries} \label{sect:prelim}

In this section, we fix some notation and terminology, and recall some basic  facts. Standard references for universal algebra are \cite{BS,MMT} and a more recent~\cite{Bergman}.

An operation $f$ on a set $A$ is \emph{idempotent} if $f(a,a,\ldots,a)=a$ for any $a \in A$, and an algebra is \emph{idempotent} if all of its basic operations (equivalently, term operations) are idempotent. For convenience we will often formulate definitions and results only for idempotent algebras. For instance, in Theorem~\ref{thm:intro_main} we would assume that $\str A$ is idempotent and omit the other two occurrences of idempotency. The difference is only cosmetic.

An $n$-ary operation $f$ on a set $A$ is \emph{compatible} with an $m$-ary relation $R \subseteq A^m$, or $R$ is \emph{compatible} with $f$, if $f(\tuple{r}_1, \ldots, \tuple{r}_n) \in R$ for any $\tuple{r}_1, \ldots, \tuple{r}_n \in R$. Here (and later as well) we abuse the notation and use $f$ also for the $n$-ary operation on $A^m$ defined from $f$ coordinate-wise.
A subset $B \subseteq A$ is a \emph{subuniverse} of an algebra $\str A$, written $B \leq \str A$, if it is the universe of a subalgebra of $\str A$; in other words, it is compatible (as a unary relation) with every basic operation (equivalently, term operation) of $\str A$. The smallest subuniverse of $\str A$ containing a set $B$ is called the subuniverse  \emph{generated} by $B$. It is equal to 
\[
\{ t(b_1, \dots, b_n): n \in \mathbb{N}, \, b_i \in B, \, t \mbox{ an $n$-ary term operation } of \str A\}.
\]

A stronger compatibility notion, absorption, turned out to be fruitful for finite algebras and finite domain CSPs~\cite{AbsorptionSurvey} and it will be useful in this paper as well.
\begin{definition}
Let $A$ be a set, $X,Y$ subsets of $A$, and  $f$ an $n$-ary operation on $A$.
We say that $X$ \emph{absorbs} $Y$ with respect to $f$ if for
any coordinate  $i=1,\ldots,n$ and any elements
$x_1,x_2,\ldots,x_{i-1},y,x_{i+1},\ldots,x_n\in A$
such that $y\in Y$ and each $x_j\in X$, we have
$$
t(x_1,x_2,\ldots,x_{i-1},y,x_{i+1},\ldots,x_n)\in X.
$$
\end{definition}
\noindent
This concept can be regarded as a generalization of near unanimity operations.
\begin{definition}
An operation $f$ on a set $A$ of arity $n > 2$ is called a \emph{near unanimity} operation, or \emph{NU}, for short, if $f(x,\ldots,x,y,x\ldots,x) = x$ for any $x,y \in A$ and any position of $y$ in the $n$-tuple of arguments.
\end{definition}

We will often work with binary relations (usually symmetric) on a set $A$. We will look at them as graphs and use a graph theoretic terminology. A tuple $(a_1, \dots, a_l) \in A^l$ is an \emph{$R$-walk} of length $l-1$ from $a_1$ to $a_l$  if $(a_i,a_{i+1})\in R$ for all
$i=1,\ldots,l-1$. If, moreover, $a_1=a_l$, we call the $R$-walk an
\emph{$R$-cycle}. A \emph{loop} in $R$ is a pair $(a,a) \in R$. 
The $k$-fold composition of $R$ with itself is denoted by $\relpower{R}{k}$, ie.\  $(a,b)\in \relpower{R}{k}$ if there is an $R$-walk of length $k$ from $a$ to $b$.
The set of out-neighbors of an element $a \in A$ or a set $B \subseteq A$ is denoted by $a^{+R}$ and $B^{+R}$, that is,
\[
a^{+R} = \{b: (a,b) \in R\}, \quad B^{+R} = \bigcup_{b \in B} b^{+R}
\]

Fix a signature $\Sigma$. An equation $s \approx t$ is a \emph{consequence} of a system of equations $\TC{S}$, or $\TC{S}$ \emph{implies} $s \approx t$, if an algebra satisfies $s \approx t$ whenever it satisfies each equation in $\TC{S}$. The consequence relation between equational conditions is defined similarly (see the introduction). We remark that both consequence relations can be equivalently defined in a purely syntactic way. 

The \emph{absolutely free algebra} (in the signature $\Sigma$) over a set of generators $X$ has as its universe the set of all terms over $X$ and basic operations act in the natural way. The \emph{free algebra} over $X$ modulo a set of equations $\TC{S}$ is a quotient of the absolutely free algebra over $X$, where $s$ and $t$ are identified if and only if $s \approx t$ is a consequence of $\TC{S}$. 
Note that an equational condition $\TC{S}$ implies an equational condition $\TC{T}$ if and only if the free algebra over $X$ (with $|X|$ at least the number of variables occurring in $\TC{T}$) modulo $\TC{S}$ satisfies $\TC{T}$.

An equation is \emph{linear} if it involves only terms of height at most one, ie.\ it is of the form 
\[
t(\mbox{variables}) \approx s(\mbox{variables}), \quad \mbox{ or } \quad
t(\mbox{variables}) \approx \mbox{variable},
\]
where $s,t$ are operation symbols. Similarly, a system of equation is \emph{linear} if all of its members are. We will be mostly dealing with linear equations and their systems. The following composition of terms, the \emph{star composition}, is often used to produce linear equational conditions.
\begin{definition} \label{def:star}
Let $f,g$ be terms of arity $n$, $m$, respectively. Then $f*g$
denotes the $(n\times m)$-ary term
\begin{align*}
&(f*g)(x_{1,1},\ldots,x_{1,m},x_{2,1}\ldots x_{n,m})  \\
&\approx f(g(x_{1,1},\ldots x_{1,m}),g(x_{2,1},\ldots x_{2,m}),\ldots
  g(x_{n,1},\ldots x_{n,m}))
\end{align*}
\end{definition}
\noindent
Note that both $f$ and $g$ can be recovered from $f*g$ if they are idempotent.

\section{Taylor algebras} \label{sect:taylor}

The basic tool for us will be a characterization of Taylor algebras by means of Taylor terms.

\begin{definition}
An $n$-ary term $t$ is a \emph{Taylor term} of an idempotent algebra $\str{A}$ if $\str{A}$ satisfies a system of equations in two variables $x,y$ of the form
\begin{align*}
t(x, ?, ?, \ldots, ?) &\approx t(y, ?, ?,\ldots, ?),\\
t(?, x, ?, \ldots, ?) &\approx t(?, y, ?, \ldots, ?),\\
&\centerby{${}\approx{}$}{$\vdots$} \\
t(?, ?, \ldots, ?, x) &\approx t(?, ?, \ldots, ?, y),
\end{align*}
where each question mark stands for either $x$ or $y$. 

Such a system of equations is called a \emph{Taylor system of equations}.
An operation $f$ on a set $A$ is called a \emph{Taylor operation} if it satisfies some system of Taylor equations.
\end{definition}

An example of a Taylor term is a Maltsev term $m$ from the introduction. Indeed, the defining equations $m(x,x,y) \approx y \approx m(y,x,x)$ imply
\begin{align*}
m(x,x,x) &\approx m(y,y,x) \\
m(x,x,x) &\approx m(y,y,x) \\
m(x,x,x) &\approx m(x,y,y)
\end{align*}

No Taylor system of equations is satisfiable by projections since the $i$-th equation prevents $t$ from being a projection to the $i$-th coordinate. Any idempotent algebra with a Taylor term is thus a Taylor algebra. Taylor proved that the converse implication also holds. 

\begin{theorem}[Corollary 5.3 in \cite{Taylor77}]
The following are equivalent for every idempotent algebra $\str A$.
\begin{itemize}
\item $\str{A}$ is a Taylor algebra;
\item $\str{A}$ has a Taylor term.
\end{itemize}
\end{theorem}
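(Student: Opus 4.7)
The direction from Taylor term to Taylor algebra is already implicit in the excerpt: if $t$ satisfies a Taylor system of equations, then that system is nontrivial, since the $i$-th equation prevents $t$ from being the $i$-th projection, and so no choice of projection satisfies all the equations simultaneously. Hence any idempotent algebra with a Taylor term is Taylor.

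For the converse I would argue by contrapositive: assume $\str A$ is idempotent with no Taylor term, and show that every idempotent equational condition satisfied by $\str A$ is trivial. Let $\str F = \str F_{\variety V(\str A)}(x,y)$ be the free algebra on two generators in the variety generated by $\str A$. The plan is to produce a homomorphism $h : \str F \to \str P$ onto the two-element projection algebra $\str P$ (in the signature of $\str A$) with $h(x) \neq h(y)$. Once we have such an $h$, any idempotent condition $\TC S$ satisfied by $\str A$ pushes forward via $h$ to a condition satisfied by $\str P$ with all operation symbols interpreted as projections, so $\TC S$ is trivial.

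The no-Taylor hypothesis provides exactly the raw material for $h$: for every $n$-ary term operation $t$ of $\str F$ there must exist a coordinate $i_t \in \{1,\dots,n\}$ which is essential in the strong sense that $t^\str{F}(\vec a) \neq t^\str{F}(\vec b)$ whenever $\vec a, \vec b \in \{x,y\}^n$ and $a_{i_t} \neq b_{i_t}$; otherwise $t$ itself would be a Taylor term. The homomorphism $h$ should then satisfy $h(t^\str{F}(\vec a)) = a_{i_t}$. The main obstacle is that the choices of $i_t$ are not forced to be coherent across compositions of terms, and a priori a direct definition of $h$ is ambiguous.

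To resolve the coherence issue I would apply Zorn's lemma in the congruence lattice of $\str F$: take a congruence $\theta$ maximal subject to $(x,y) \notin \theta$. The quotient $\str F/\theta$ is then subdirectly irreducible, lies in $\variety V(\str A)$, is generated by the two distinct elements $x/\theta$ and $y/\theta$, and still fails to admit a Taylor term. The heart of the argument — and the step I expect to be the main technical obstacle — is to show that $\str F/\theta$ must be the two-element projection algebra. This is a rigidity claim combining subdirect irreducibility with the absence of Taylor equations to force every term operation on the two generators to behave like a projection, ruling out further elements and any non-projection clone structure; star composition from the preliminaries together with a careful essential-coordinate analysis appear to be the right tools. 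Once this is established, the composite $\str F \to \str F/\theta \cong \str P$ is the required $h$, completing the contrapositive.
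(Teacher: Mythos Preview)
The paper does not prove this theorem; it is cited from Taylor's 1977 paper, with only the easy direction spelled out in the paragraph preceding the statement. So there is no proof in the paper to compare against, and I evaluate your argument on its own.

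Your contrapositive target is the right one---one wants the two-element projection algebra $\str 2$ to lie in $\variety V(\str A)$---but the route through an \emph{arbitrary} maximal congruence fails. The assertion that $\str F/\theta$ ``still fails to admit a Taylor term'' is not justified: quotients can gain equations, hence gain Taylor terms. Concretely, let $\variety V$ be the variety of idempotent magmas (one binary symbol $f$, sole identity $f(x,x)\approx x$), which has no Taylor term. The three-element algebra $\str B=\{0,1,2\}$ with $f(0,1)=f(1,0)=2$, $f(0,2)=f(2,0)=0$, $f(1,2)=f(2,1)=1$ is a simple, two-generated member of $\variety V$, hence isomorphic to $\str F_{\variety V}(x,y)/\theta$ for some maximal $\theta$ with $(x,y)\notin\theta$; yet $\str B$ is commutative, so $f$ is already a Taylor term for $\str B$, and $\str B$ is certainly not $\str 2$. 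Zorn's lemma hands you \emph{some} maximal congruence with no control over which one, so subdirect irreducibility gives no leverage toward forcing the quotient to be $\str 2$; your ``rigidity claim'' is simply false for some legitimate choices of $\theta$.

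The standard proofs avoid this trap. Taylor's original argument works in the forward direction: reduce the given nontrivial condition to a single symbol $f$ realized by a term $t$, observe that for each coordinate $i$ the substitution $f\mapsto\pi_i$ violates some equation, and then use an iterated star product $t*t*\cdots*t$ together with idempotency to recast those violated equations as a Taylor system for one term. Alternatively, staying in your contrapositive, one builds the homomorphism to $\str 2$ \emph{directly} by coherently selecting a bad coordinate for each term; the key fact $B_{t*s}\subseteq B_t\times B_s$ (where $B_t$ is the set of coordinates at which no Taylor equation holds) lets a compactness argument produce a globally coherent choice. Neither approach passes through a subdirectly irreducible quotient.
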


Several strengthenings of this theorem for 
\emph{finite} algebras are formulated in the following theorem.

\begin{theorem} \label{thm:finite_taylor}
The following are equivalent for each finite idempotent algebra $\str A$.
\begin{itemize}
\item $\str A$ is a Taylor algebra;

\item \cite{WNU} For some $n \geq 2$, $\str A$ has a term $t$ of arity $n$ that satisfies
\[
t(x,x,\ldots,x,y) \approx t(x,\ldots,x,y,x) \approx \cdots \approx t(x,y,x\ldots,x) \approx t(y,x,\ldots,x,x);
\]
\textup(\emph{weak near unanimity term} of arity $n$, or $n$--WNU for short\textup)

\item \cite{cyclic} For each prime $n > |A|$, $\str A$ has a term $t$ of arity $n$ that satisfies
\[
t(x_1, x_2, \ldots, x_n) \approx t(x_2, \dots, x_n, x_1);
\]
\textup(\emph{cyclic term}\textup)

\item \cite{Siggers}
$\str A$ has a $6$-ary term $t$ that satisfies
$s(x,y,x,z,y,z) \approx s(y,x,z,x,z,y)$;
\textup(\emph{6-ary Siggers term}\textup)

\item \cite{OptimalStrong}
$\str A$ has a $4$-ary term $t$ that satisfies
$s(r,a,r,e) \approx s(a,r,e,a)$.
\textup(\emph{4-ary Siggers term}\textup)
\end{itemize}
\end{theorem}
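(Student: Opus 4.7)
All four specific term conditions --- $n$-WNU, cyclic term of prime arity, $6$-ary Siggers, and $4$-ary Siggers --- are themselves Taylor systems of equations, because in each case no projection satisfies the defining equations (the $i$-th coordinate is blocked by inspecting the appropriate identity). Hence each of these conditions immediately forces $\str A$ to be Taylor, and the nontrivial content of the theorem is to extract such a highly symmetric term from an arbitrary Taylor term when $\str A$ is finite. My plan is to derive them as a chain
\[
\text{Taylor} \;\Rightarrow\; n\text{-WNU} \;\Rightarrow\; \text{cyclic of arity } p \text{ for every prime } p > |A| \;\Rightarrow\; 6\text{-ary Siggers} \;\Rightarrow\; 4\text{-ary Siggers},
\]
and close the loop by the trivial reverse implication.

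For the first step, start with a Taylor term $t$ of arity $n$ satisfying some Taylor system; iterated star-compositions $t * t * \cdots * t$ with coordinate permutations allow one to symmetrize $t$ against each of the Taylor equations in turn. Because $\str A$ is finite, there are only finitely many term operations of any given arity, so a pigeonhole argument forces some iterate to become invariant under all cyclic shifts of its arguments, which together with the Taylor equations and idempotency yields an $n$-WNU $w$. For the second step, the Barto--Kozik argument, look at the subpower of $\str A^p$ generated by all cyclic shifts of a fixed tuple and show, using absorption theory and a loop lemma for finite digraphs preserved by $w$, that it must contain a constant (diagonal) tuple once $p$ is a prime strictly larger than $|A|$; that diagonal tuple gives rise to a $p$-ary term $c$ satisfying $c(x_1,\dots,x_p) \approx c(x_2,\dots,x_p,x_1)$. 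The last two implications are more combinatorial: a $6$-ary Siggers term is obtained by specializing arguments of a cyclic term of suitable arity so that the cyclic identity collapses exactly to $s(x,y,x,z,y,z) \approx s(y,x,z,x,z,y)$, and the $4$-ary refinement of Kearnes--Markovi\'c--McKenzie is a careful further substitution.

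The main obstacle is clearly the second step, the production of cyclic terms. All the heavy lifting --- absorption, loop lemmata, and the delicate analysis of invariant binary relations of Taylor polymorphisms --- is concentrated there, and it relies crucially on finiteness of $\str A$. In fact each of the individual implications in the chain is due to a different paper cited in the statement, and \emph{none} of the four strong conditions survives the passage to infinite algebras; this is precisely the difficulty that the rest of the present paper is devoted to addressing by inventing a genuinely different $6$-ary equation that does work in full generality.
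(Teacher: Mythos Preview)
The paper does not give its own proof of this theorem; it is stated as known background with a citation on each item, and the only commentary the paper offers (in Section~\ref{sect:loop}) is that historically both Siggers terms are obtained not as consequences of one another but independently from loop lemmata --- the $6$-ary term from Theorem~\ref{thm:loop_HN} applied to $K_3$, and the $4$-ary term from the stronger loop lemma for smooth digraphs of algebraic length one. So there is no paper proof to compare against, only your sketch to assess.

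Two steps in your chain are problematic. First, your description of Taylor $\Rightarrow$ WNU (``pigeonhole on iterated star-compositions forces cyclic invariance'') is too vague to be a proof; the Mar\'oti--McKenzie argument is substantially more delicate than a symmetrization-plus-pigeonhole, and cyclic invariance of a term under coordinate shifts is in fact the content of the \emph{cyclic term} theorem, not the WNU theorem. Second, and more seriously, you claim $6$-ary Siggers $\Rightarrow$ $4$-ary Siggers by ``careful further substitution''. No such substitution is known; the paper itself lists exactly this implication (in the general case) as an open problem in Section~\ref{sect:open}. In the finite case the implication does hold, but only by passing back through ``$\str A$ is Taylor'' and invoking the full loop-lemma machinery of \cite{TheLoopLemma,OptimalStrong} --- it is not a syntactic consequence. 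Your chain should rather branch at the cyclic term (or at the appropriate loop lemma) and derive the two Siggers terms in parallel, as the cited literature does.
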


Note that all the terms that appear in Theorem~\ref{thm:finite_taylor} are Taylor terms although the defining equations of cyclic and Siggers terms involve more than two variables. Two variable equations can be simply obtained by suitable substitution of variables, eg.\ the 4-ary Siggers term implies
\begin{align*}
s(x,y,x,x) &\approx s(y,x,x,y) \\
s(y,x,y,x) &\approx s(x,y,x,x) \\
s(x,y,x,y) &\approx s(y,x,y,y) \\
s(y,y,y,x) &\approx s(y,y,x,y)
\end{align*}

None of the strengthenings of Taylor terms in Theorem~\ref{thm:finite_taylor} work for infinite algebras.
The following algebra can serve as a counterexample for WNUs (or cyclic terms): The universe is the set of all integers and basic operations are all the operations of the form $f(x_1, \dots, x_n) = \sum_{i=1}^n a_ix_i$, where $a_i$'s are integers with $\sum_{i=1}^n a_i = 1$. This algebra is idempotent and has a Maltsev term $m(x,y,z) = x - y + z$. On the other hand, it has no weak near unanimity term since each term operation is a basic operation and no basic operation is a WNU (the WNU equations force $a_1 = a_2 = \cdots = a_n$ but then $\sum_{i=1}^n a_i \neq 1$).

As for Siggers terms, Alexandr Kazda proved that Taylor terms, or even WNU terms, do not imply any nontrivial  strong Maltsev condition involving a single linear equation. We include a sketch of his argument.

\begin{theorem} \label{thm:kazda}
There is an idempotent algebra which has a 3--WNU term but does not satisfy any nontrivial strong Maltsev condition consisting of a single linear equation. 
\end{theorem}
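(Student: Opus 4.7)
The strategy is to exhibit an explicit idempotent algebra $\str{A}$ equipped with a 3-WNU operation whose clone of term operations is rigid enough to falsify every nontrivial single linear equation. The most natural candidate is an algebra on an infinite universe like $\mathbb{Z}$ with a single basic ternary operation $w$ that is \emph{conservative} (always returns one of its inputs) and as asymmetric as possible on triples of pairwise distinct values, while still satisfying the 3-WNU identities on triples with a repeated coordinate.

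The first technical step is to characterize the term operations of $\str{A}$. Using induction on term complexity, conservativity of $w$ propagates to every term, and one gets the finer fact that, for each equality pattern $P$ on the input positions, the restriction of every term operation $t^{\str{A}}$ to $P$-patterned tuples coincides with a single coordinate projection $\pi_{k_t(P)}$ determined by the syntactic shape of $t$. Thus each term of $\str{A}$ is encoded by a combinatorial index function on equality patterns.

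Next consider any nontrivial single linear equation $s(\vec{u}) \approx t(\vec{w})$. If the two operation symbols differ, nontriviality forces the variable sets of $\vec{u}$ and $\vec{w}$ to be disjoint, and substituting them into disjoint infinite subsets of $\mathbb{Z}$ contradicts conservativity. If the symbols coincide, the structural lemma converts the equation under substitutions running through every partition of the equation's variables into a linkage system on the index function $k_s$ of $s$. Combining the constraints extracted from all these partitions should rule out any $k_s$ consistent with the nontriviality hypothesis $u_j \ne w_j$ for every $j$.

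The main obstacle is precisely this last step when $\vec{u}$ or $\vec{w}$ has repeated coordinates, as in the 4-ary Siggers equation $s(r,a,r,e) \approx s(a,r,e,a)$. A naive conservative $w$ (for instance, the Maltsev-like rule $w(a,a,b) = w(a,b,a) = w(b,a,a) = b$ together with $w(a,b,c) = a$ on distinct triples) turns out to admit ``accidental'' terms satisfying Siggers, e.g.\ $s(x_1,x_2,x_3,x_4) = w(x_1,x_3,x_4)$, for which both sides evaluate to $e$. Overcoming this likely requires either a more elaborate design of $w$ -- mixing Maltsev-like and majority-like behavior on different parts of $\mathbb{Z}$, or varying the rule on distinct triples with the cyclic order of the inputs -- or a direct product of several complementary algebras, each ruling out a different family of single linear equations. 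Verifying that such a construction simultaneously closes off every nontrivial single linear equation is the delicate combinatorial heart of the proof.
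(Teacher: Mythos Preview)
Your proposal is not a proof but a strategy outline with an explicitly unresolved core. You yourself identify the gap: for conservative algebras of the kind you describe, the single-equation Siggers identity (and many similar ones) can be satisfied accidentally by terms that project away the troublesome coordinates, and you offer only speculative remedies (``more elaborate design of $w$'', ``direct product of several complementary algebras'') without carrying any of them out. Until that combinatorial closure is actually established, nothing has been proved. Moreover, the conservative-on-$\mathbb{Z}$ idea is structurally fragile: any conservative ternary operation yields many terms whose index functions on equality patterns collapse, and there are infinitely many shapes of nontrivial single linear equations to rule out simultaneously, so an ad hoc case analysis is unlikely to terminate cleanly.

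The paper's argument takes a completely different route that sidesteps this difficulty. Rather than building a concrete operation on $\mathbb{Z}$ and analyzing its term clone, one takes the free algebra $\str{F}$ in the signature $\{t\}$ over countably many generators modulo only the idempotency and 3--WNU equations. This is the \emph{generic} 3--WNU algebra: if any idempotent 3--WNU algebra fails every nontrivial single linear equation, this one does. One then exhibits a single binary relation $R\subseteq F^2$, defined by ``no representative of $s$ is a subterm of a representative of $t$, and conversely'', checks that $R$ is compatible with the basic operation $t$ (hence with every term operation of $\str{F}$), and finally shows that no operation compatible with $R$ can satisfy a nontrivial single linear equation. The last step is uniform: any such equation, after substituting distinct generators for its variables, forces two terms to be equal while their argument tuples are pairwise $R$-related, contradicting compatibility. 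The relation $R$ thus handles all single linear equations at once, with no case-by-case design of the algebra; this is exactly the missing idea in your approach.
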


\begin{proof}[Sketch of proof]
Consider the signature consisting of a single ternary symbol $t$ and take the free algebra $\str F$ over countably many generators modulo $\{t(x,x,x) \approx x, t(x,x,y) \approx t(x,y,x) \approx t(y,x,x)\}$. By definition, $\str F$ is idempotent and $t$ is a 3--WNU of $\str F$. 
We define a binary relation $R \subset F^2$ so that $(s,t) \in R$ if no representative of $s$ is a subterm of a representative of $t$ and, conversely, no representative of $t$ is a subterm of a representative of $s$. The relation $R$ is compatible with $t$ but it is not compatible with any operation satisfying a nontrivial linear equation.
\end{proof}

We finish this section with two remarks which say that Theorem~\ref{thm:kazda} is in a sense optimal. The first observation is that any idempotent Taylor algebra satisfies a nontrivial system of two linear equations in a single operation symbol. Indeed, if $t$ is a Taylor term, then
\[
  \vbox{\def\c#1{\hfil#1\hfil}
  \halign{$#{}$&$t(t(\c{#},{}$&$\c{#},\ldots,{}$&$\c{#}),{}$&%
                    $t(\c{#},{}$&$\c{#},{}$&$#$&%
                    $),\ldots,{}t(\c{#},{}$&$#$&$,\c{#}))$&${}#$\cr
 &x_1&x_2&x_n&x_1&x_2&  \ldots,\c{x_n}  &x_1&\c{x_2},\ldots&x_n&\cr
\approx&x_1&x_1&x_1&x_2&x_2&  \ldots,\c{x_2}  &x_n&\c{x_n},\ldots&x_n& \omit,\hss\cr
\noalign{\smallskip}
 & x & ? & ? & ? & x &\c{?},\ldots,\c{?}& ? & \ldots,\c{?} & x &\cr
\approx& y & ? & ? & ? & y &\c{?},\ldots,\c{?}& ? & \ldots,\c{?} & y & \omit,\hss\cr
  }}
\]
where the question marks are chosen in accordance with the Taylor equations. 
These two equations trivially imply two linear equations for $s = t*t$ which form a nontrivial strong Maltsev condition. Note that the first equation follows solely from the idempotency while the second from the Taylor equations. 
This will be a feature of the first weakest nontrivial system of two equations from Section~\ref{sect:double_loop}. 

The second observation is that any idempotent Taylor algebra satisfies a nontrivial nonlinear equation. Indeed, the second equation from those above is nontrivial when considered in the signature $\{t\}$. In particular, our weakest nontrivial conditions can be rewritten into a single nontrivial equation.

\section{A loop lemma} \label{sect:loop}

By a \emph{loop lemma} we mean a statement of the form: If a binary relation satisfies some structural assumption and is compatible with some ``nice'' operations, then it contains a loop (ie., a pair $(a,a) \in R$). An example of a loop lemma is the following theorem. It can be deduced from~\cite{HellNesetril} and in this form it was proved in~\cite{BulatovLoop}.

\begin{theorem} [\cite{HellNesetril,BulatovLoop}] \label{thm:loop_HN}
If $R$ is a symmetric relation on a finite set $A$, $R$ contains an odd cycle, and $R$ is compatible with an idempotent Taylor operation on $A$, then $R$ contains a loop.
\end{theorem}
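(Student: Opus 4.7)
The plan is to use Theorem~\ref{thm:finite_taylor} to equip $\str{A}$ with a cyclic term of prime arity exceeding $|A|$, and to combine this with a minimal counterexample reduction on $R$. First I would replace $\str{A}$ by a subalgebra $\str{B}$ whose universe $B$ is minimal among subuniverses of $\str{A}$ such that $R \cap B^2$ still contains an odd cycle. Every proper subuniverse of $\str{B}$ then has odd-cycle-free intersection with $R$, and $\str{B}$ itself remains a finite idempotent Taylor algebra (being Taylor is inherited by subalgebras since term operations on $\str{B}$ are restrictions of term operations on $\str{A}$). Replacing $R$ by $R \cap B^2$, we may assume $\str{A} = \str{B}$.

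Next I would fix a shortest odd $R$-cycle $a_0, a_1, \ldots, a_{l-1}, a_0$; by the minimality of $l$ these vertices are pairwise distinct and the cycle is chordless. Theorem~\ref{thm:finite_taylor} supplies a cyclic term $t$ of some prime arity $p > |A|$ whose defining identity $t(x_1, \ldots, x_p) \approx t(x_2, \ldots, x_p, x_1)$ makes its value invariant under cyclic permutation of arguments. Applying $t$ coordinatewise to a $p$-tuple of edges drawn from the odd cycle yields an element $(u,v) \in R$ whose two coordinates are obtained from index sequences that differ by a shift of $1$ modulo $l$; the goal is to choose the edges so that cyclicity of $t$ forces $u = v$, producing a loop.

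The main obstacle, and the genuine heart of the argument, is an arithmetic mismatch: since $p$ is prime and exceeds $l$, one has $\gcd(p,l) = 1$, so no uniform choice of edges produces an index multiset invariant under the shift by $1$ modulo $l$, which is what the naive approach requires. I expect to bypass this through absorption theory: either locate a proper absorbing subuniverse of $\str{A}$ meeting the odd cycle (contradicting minimality of $B$ after replacing $A$ by that subuniverse), or, in its absence, exploit a structural dichotomy—such as $\str{A}$ being subdirectly irreducible of a specific TCT type—to reduce to a base case where a WNU or $4$-ary Siggers operation from Theorem~\ref{thm:finite_taylor} directly collapses the chordless odd cycle to a loop. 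This is in essence the strategy carried out in \cite{HellNesetril} and \cite{BulatovLoop}, and the delicate combinatorial step is precisely to extract a loop from the interplay between the chordless odd cycle in minimal $\str{B}$ and an appropriately chosen Taylor term of small arity, where very little slack remains for $R$ to avoid the diagonal.
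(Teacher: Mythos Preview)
The paper does not prove Theorem~\ref{thm:loop_HN}; it is quoted from \cite{HellNesetril,BulatovLoop} as background for the later material, so there is no in-paper argument to compare against.

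Your outline is incomplete, and the obstacle you flag is not real. The symmetry of $R$ is precisely what removes the ``arithmetic mismatch''. Take a shortest odd $R$-cycle; its vertices are distinct, so its length $l$ satisfies $l \le |A| < p$. Since $p$ and $l$ are both odd, you can form a closed $R$-walk $b_0, b_1, \ldots, b_p = b_0$ of length exactly $p$ by going once around the cycle and then padding with $(p-l)/2$ back-and-forth steps along one edge. Compatibility of $R$ with the cyclic term $t$ gives $\bigl(t(b_0,\ldots,b_{p-1}),\, t(b_1,\ldots,b_p)\bigr) \in R$, and cyclicity gives $t(b_1,\ldots,b_{p-1},b_p) = t(b_1,\ldots,b_{p-1},b_0) = t(b_0,b_1,\ldots,b_{p-1})$, so this pair is a loop. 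No minimal-subuniverse reduction, no chordlessness, and no absorption dichotomy is needed; the detour you sketch in the final paragraph never has to be taken.

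A separate warning: be careful about circularity when invoking Theorem~\ref{thm:finite_taylor} here. The paper explicitly notes that the $6$-ary Siggers term is \emph{derived from} Theorem~\ref{thm:loop_HN} (see the discussion around Corollary~\ref{cor:NUSiggers}), and the cyclic-term result of \cite{cyclic} belongs to the same circle of ideas. If you want a self-contained argument you should check that the item of Theorem~\ref{thm:finite_taylor} you rely on is established independently of the loop lemma, or else argue directly from a Taylor operation as in \cite{BulatovLoop}.
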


A generalization of Theorem~\ref{thm:loop_HN}~\cite{TheLoopLemma} (see also~\cite{cyclic}), sometimes referred to as ``the Loop Lemma'', weakens the assumption on $R$: $R$ is \emph{smooth} (ie. a vertex has an incoming edge if and only if it has an outgoing edge) and $R$ has \emph{algebraic length one} (ie. there is a closed walk with one more forward edges than backward edges). 

Both versions were originally used to prove NP-completeness of some CSPs. Later, it was observed that one can apply these results to obtain strong Maltsev conditions for finite Taylor algebras; the 6-ary Siggers term~\cite{Siggers} from Theorem~\ref{thm:loop_HN} and the 4-ary version ~\cite{OptimalStrong} from the mentioned generalization (the terms are obtained in the same way as in Corollary~\ref{cor:NUSiggers}). 

The finiteness assumption in Theorem~\ref{thm:loop_HN} is essential as witnessed by the binary relation in the proof of Theorem~\ref{thm:kazda}. However, an infinite analogue of Theorem~\ref{thm:loop_HN} becomes true when the algebraic assumption is strengthened to ``$R$ is compatible with a near unanimity operation on $A$'', see Corollary~\ref{cor:NUloop}.
Such a loop lemma would be sufficient for our purposes. Nevertheless, in order to isolate the crucial property and for possible future reference, we prove a slightly stronger version which uses the following concept.

\begin{definition}
Let $A$ be a set, $f$ an operation on $A$ and $R\subset A^2$  a symmetric relation. We say that $R$ \emph{produces enough absorption} with respect to $f$ if
 for every element $x\in A^{+R}$ (a non-isolated element), the set $x^{+R}$ of neighbors of $x$ 
 absorbs $\{x\}\cup x^{+R}$ with respect to $f$.
\end{definition}

We are ready to state and prove the promised loop lemma.

\begin{theorem} \label{thm:loop}
Let $A$ be a set,
$R\subset A^2$ a symmetric binary relation containing an odd cycle, and $f$ an operation on $A$ compatible with $R$ such that $R$ produces enough absorption wrt.\  $f$. 
Then $R$ contains a loop. 
\end{theorem}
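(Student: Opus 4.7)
The plan is to argue by contradiction. Assume $R$ contains no loop and take a shortest odd closed $R$-walk $C=(a_0,\dots,a_{2k+1}=a_0)$, which we may assume is a simple cycle with $k\ge 1$. By the minimality of $|C|$ no ``chord'' exists: if $(a_i,a_j)\in R$ with $i,j$ non-consecutive on the cycle, then splitting $C$ at such a chord yields a shorter closed walk of odd length, contradicting minimality; in particular the only cycle vertices in $N(a_0):=a_0^{+R}$ are $a_1$ and $a_{2k}$.

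The absorption hypothesis is then leveraged via two observations. First, $N(a_0)$ is a subuniverse of $(A,f)$: the absorption property applied with all $n$ inputs in $N(a_0)$ (any of them viewed as the ``exception,'' since $N(a_0)\subseteq\{a_0\}\cup N(a_0)$) yields an output in $N(a_0)$. Second, producing any $R$-edge $(b,c)$ with $b,c\in N(a_0)$ would give a triangle $a_0,b,c,a_0$ in $R$, contradicting the minimality of $|C|$ whenever $k\ge 2$; for the triangle base case $k=1$, a loop must be produced directly from the interplay of the absorption constraints at the three cycle vertices together with the compatibility of $f$.

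The central technical step (for $k\ge 2$) is to exhibit this edge inside $N(a_0)$. The plan is to apply the $n$-ary operation $f$ coordinate-wise to $n$ closed $R$-walks at $a_0$ of length $2k+1$: the original cycle $C$, together with $n-1$ auxiliary closed walks that stay in $\{a_0\}\cup N(a_0)$ and visit $a_0$ only at the endpoints. Compatibility of $f$ with $R$ guarantees the combined walk is an $R$-walk, and the absorption at $a_0$ forces each interior position into $N(a_0)$; two consecutive interior vertices then form the sought edge inside $N(a_0)$.

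The principal obstacle is the construction of the auxiliary walks, because odd closed walks inside $\{a_0\}\cup N(a_0)$ themselves require an edge in $N(a_0)$ -- precisely what we are trying to establish. Breaking this circularity likely requires passing to higher-arity star compositions $f*f, f*f*f, \ldots$, which continue to satisfy the ``enough absorption'' property and accommodate a growing number of exceptions, together with a careful treatment of the triangle base case in which the loop is forced by iterating the compatibility of $f$ with the triangle itself (analogous to how the impossibility of any $2$-ary $f$ with the absorption property on a pure $5$-cycle follows from the empty intersection of the absorbing neighborhoods $N(a_i)\cap N(a_{i+1})$).
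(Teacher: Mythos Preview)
Your proposal is not a proof but a plan, and you explicitly acknowledge that both of its main subgoals remain open: the triangle base case $k=1$ is left as ``a loop must be produced directly'' with no argument, and for $k\ge 2$ you yourself identify the circularity that kills the construction of the auxiliary walks. The suggested escape via star compositions $f*f,\dots$ goes in the wrong direction: iterating $f$ increases the arity and the number of tolerated exceptions, but what is actually needed to break the induction is a mechanism that \emph{decreases} the arity until a terminal case is reached. Higher star powers of $f$ do not by themselves produce the required auxiliary odd walks inside $\{a_0\}\cup N(a_0)$; as you noted, when $N(a_0)$ has no internal edge, the only walks in that set alternate through $a_0$ and so cannot avoid $a_0$ at interior positions.

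The paper's proof supplies precisely the missing idea. It introduces an auxiliary operation $g$ (initially $g=f$) and runs a double induction, primarily on $\Ar g$ and secondarily on the odd cycle length $l$. The reduction in $l$ is handled cleanly by passing from $R$ to $\relpower{R}{3}$, which preserves symmetry, compatibility, and enough absorption, and shortens the cycle length by $2$; this yields a triangle $a,b,c$ in $R$. The real work is then done at the triangle: one restricts to $A'=a^{+R}$, plugs $a$ into the last coordinate of $g$ to obtain an operation $g'$ of arity $\Ar g -1$ on $A'$, and exhibits an explicit odd $R'$-cycle of length $2\Ar f -1$ inside $A'$ built from the triangle $a,b,c$. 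The induction bottoms out at $\Ar g =1$, where the absorption hypothesis immediately forces a loop $(g(x),g(x))\in R$. So the triangle case is not an easy terminal case at all; it is the engine of the induction, and the key device you are missing is the shrinking-arity auxiliary $g$ with its unary base case.
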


The theorem immediately follows from the following technical result by putting $g=f$.

\begin{lemma} \label{lem:real_loop}
Let $A$ be a set, $R\subset A^2$ a symmetric binary relation,
$f,g$ operations on $A$, and  $l$ a positive odd integer. Moreover, assume that
\begin{enumerate}
\item[(1)] $R$ contains a cycle of length $l$.
\item[(2)] $R$ is compatible with $f$,
\item[(3)] $R$ produces enough absorption wrt.\ $f$,
\item[(4)] $\Ar{g} \leq \Ar{f}$ and whenever
$(x_1,y_1),\ldots,(x_{\Ar{f}}, y_{\Ar{f}})\in R$,
  then \\
    $(g(x_1,\ldots,x_{\Ar{g}}),f(x_1,\ldots,x_{\Ar{f}}))\in R$.
\item[(5)] $R$ produces enough absorption wrt.\ $g$,
\end{enumerate}
Then $R$ contains a loop.
\end{lemma}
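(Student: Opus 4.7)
The plan is to proceed by induction on the odd length $l$ from hypothesis~(1). The base case $l=1$ is immediate, since a cycle of length one is a loop. For the inductive step, assume $l\geq 3$ and that the lemma holds for every odd length $l'<l$; using symmetry of $R$ to cancel back-and-forth traversals of the same edge, it suffices to produce a closed $R$-walk through the cycle whose algebraic length is an odd number strictly less than $l$, because any such walk contains a shorter odd $R$-cycle on which the induction hypothesis applies.

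To build such a walk I would apply $f$ and $g$ coordinate-wise to tuples formed from cyclic shifts of the fixed cycle $(a_0,a_1,\dots,a_{l-1},a_0)$. Hypothesis~(2) guarantees that applying $f$ to $\Ar{f}$ $R$-walks of common length gives an $R$-walk of that length; hypothesis~(4) provides a ``mixed'' compatibility, letting the pair $(g(\cdot),f(\cdot))$ lie in $R$ even though $g$ only reads the first $\Ar{g}$ coordinates. This is the mechanism that lets me bridge an $f$-walk to a $g$-walk and splice the two into a new closed $R$-walk emanating from $a_0$. The absorption hypotheses~(3) and~(5) then control where the splicing lands: centred at any non-isolated vertex $x$, both operations send a tuple from $\{x\}\cup x^{+R}$ with one ``anomalous'' coordinate into $x^{+R}$, so swapping one coordinate of an aligned tuple for a nearby vertex does not leave the neighbourhood of~$x$. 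Iterating such swaps against a fixed reference vertex on the cycle collapses consecutive edges of the new walk onto a single edge, so the total edge-count strictly decreases while the parity of the algebraic length is preserved by the coordinate-wise action of $f$ and $g$.

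The main obstacle I anticipate is the combinatorial bookkeeping needed to carry this out with absorption alone rather than a near-unanimity identity. Absorption only guarantees membership in $x^{+R}$ without giving an explicit value, so no single swap visibly shortens the walk; instead one must compose a controlled sequence of swaps (linearly many in $l$) against a fixed reference vertex, and use the asymmetric link from~(4) to close the result up into an honest $R$-walk rather than a disjoint collection of edges. Choosing the shift pattern and the schedule of swaps so that the terminal walk has odd algebraic length strictly less than $l$, and verifying by induction on the number of swaps that each intermediate configuration remains an $R$-walk via the two absorption properties, is the heart of the argument; once accomplished, the induction hypothesis on $l$ yields the desired loop.
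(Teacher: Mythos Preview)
Your proposal has a genuine gap: you are inducting on the wrong parameter. A single induction on the odd length $l$ cannot close the argument, because compatibility and the two absorption hypotheses give you no way to manufacture a \emph{shorter} odd $R$-cycle inside $R$ itself. Applying $f$ (or $g$) to shifted copies of the given cycle produces another closed $R$-walk of the \emph{same} length $l$; your proposed ``swap schedule'' against a reference vertex only tells you that certain images land in some $x^{+R}$, which is information about single edges, not about total walk length. You yourself flag this bookkeeping as ``the main obstacle'' and ``the heart of the argument'', but there is no concrete mechanism offered, and I do not see one that works with absorption alone.

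The paper's proof is organised around a \emph{double} induction, with the \emph{primary} parameter being $\Ar g$ and the secondary one $l$. The step you are missing is the reduction of $\Ar g$: once a triangle $a,b,c$ in $R$ is available, one restricts to $A'=a^{+R}$ and replaces $g$ by $g'(x_1,\dots,x_{\Ar g-1})=g(x_1,\dots,x_{\Ar g-1},a)$. Hypotheses (3) and (5) are exactly what make $A'$ closed under $f$ and $g'$, and hypothesis (4) is what lets the mixed $g'$--$f$ compatibility survive on $A'$; an odd cycle inside $A'$ is then built from the triangle using (2) and (3). The secondary induction on $l$ is used only to get down to that triangle, and it is carried out not by shortening cycles in $R$ but by passing to $\relpower{R}{3}$, where an $l$-cycle automatically becomes an $(l-2)$-cycle and all five hypotheses are re-verified. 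The base case where the loop is actually produced is $\Ar g=1$, not $l=1$. Without the arity-reduction idea (or McKenzie's variant, which likewise plugs a fixed vertex into $f$), the argument does not terminate.
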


\begin{proof}
The proof proceeds by induction, primarily on $\Ar{g}$, secondarily on $l$.

We start with the base steps. If $l=1$, then $R$ contains a cycle of length one -- a loop.  If $\Ar{g}=1$, pick a
vertex $x\in A^{+R}$. It is absorbed by $x^{+R}$ wrt.\ $g$, so $g(x)\in x^{+R}$, equivalently $x\in g(x)^{+R}$. Since $g(x)^{+R}$ absorbs itself wrt.\ $g$, it is closed under $g$. Thus $g(x)\in g(x)^{+R}$ and we get the loop $(g(x),g(x))\in R$.

Now suppose $l > 1$, $\Ar{g} > 1$ and use the induction hypothesis
for the same $A$, $f$ and $g$ but with $\relpower R3$ instead of $R$ and $l-2$
instead of $l$. The relation $\relpower R3$ is clearly symmetric, the remaining assumptions are verified as follows. 
\begin{enumerate}
\item[(1)] $\relpower R3$ contains a cycle of length $l-2$: If elements $x_1,x_2,\ldots,x_l$ form an $R$-cycle of length
$l$, then $x_1, x_2,\ldots,x_{l-2}$ form an $\relpower R3$-cycle of length
$l-2$.
\item[(2)] $\relpower R3$ is compatible with $f$: Consider pairs $(x_i,y_i)\in \relpower R3$, where $i=1,\ldots,\Ar{f}$.
Then
there are $u_i,v_i\in A$ such that $(x_i,u_i,v_i,y_i)$ is an $R$-walk of length 3. Since $f$ is compatible with $R$, the tuple
$$
(f(x_1,\ldots,x_{\Ar{f}}),
f(u_1,\ldots,u_{\Ar{f}}),
f(v_1,\ldots,v_{\Ar{f}}),
f(y_1,\ldots,y_{\Ar{f}}))
$$
forms an $R$-walk of length 3 and thus $(f(x_1,\dots,x_{\Ar f}),f(y_1,\ldots,y_{\Ar{f}}))$ is in $\relpower R3$, as required.

\item[(3)] $\relpower R3$ produces enough absorption wrt.\ $f$: Assume $y\in A$ is non-isolated and take $x_1,\ldots,x_{\Ar{f}}$ from $y^{+\relpower R3}$ with one possible exception $x_j=y$. In that case, since
$x_j=y$ is a non-isolated element, we can set $v_j=y$ and pick $u_j$ such that
$(x_j,u_j,v_j)$ forms an $R$-walk. For each $i\neq j$ there is an $R$-walk
$(x_i,u_i,v_i,y)$. Then
$$
(f(x_1,\ldots,x_{\Ar{f}}),
f(u_1,\ldots,u_{\Ar{f}}), f(v_1,\ldots,v_{\Ar{f}}), y)
$$
is an
$R$-walk due to assumptions (2) and (3). Therefore $f(x_1,\ldots,x_{\Ar{f}}) \in y^{\relpower R3}$, as required.
\item[(4)] ``$\relpower R3$ is compatible with $g$-$f$'': Consider $x_i,y_i,u_i,v_i$ as in the proof of item (2). Then
$$(g(x_1,\ldots,x_{\Ar{g}}), f(u_1,\ldots,u_{\Ar{f}}),
f(v_1,\ldots,v_{\Ar{f}}), f(y_1,\ldots,y_{\Ar{f}}))$$
is an
$R$-walk by assumptions (4) and (2).
\item[(5)] $\relpower R3$ produces enough absorption wrt.\ $g$: Let  $y\in A$ be non-isolated and $x_1,\ldots,x_{\Ar{g}}$ in $y^{+R}$ with one possible exception $x_j=y$. In that case, since $x_j=y$ is a
non-isolated element, we can set $v_j=y$ and pick $u_j$ such that that
$(x_j,u_j,v_j)$ forms an $R$-walk. For each $i\neq j, i\leq \Ar{g}$ there is
an $R$-walk $(x_i,u_i,v_i,y)$. Finally, for each $i = \Ar{g}+1, \ldots, \Ar{f}$, we pick
$v_i,u_i,x_i$ such that $(x_i,u_i,v_i,y)$ forms an $R$-walk. Then
the sequence
$$(g(x_1,\ldots,x_{\Ar{g}}),f(u_1,\ldots,u_{\Ar{f}}),
f(v_1,\ldots,v_{\Ar{f}}),y)$$
is an $R$-walk by assumptions (4), (2) and (3), and the claim follows.
\end{enumerate}

\medskip
\begin{figure}[!ht]
  \def\svgwidth{10cm}
  \centering
     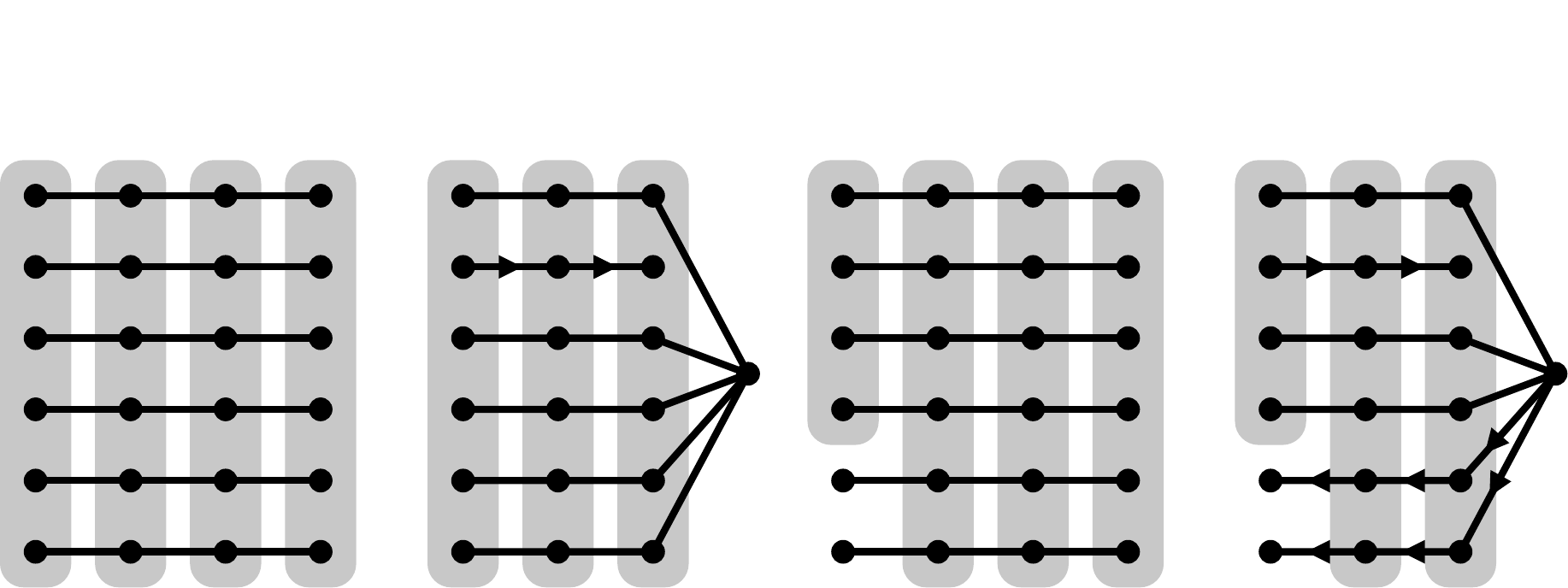
  \caption{The verification of conditions for $\relpower R3$.}
\end{figure}

The induction hypothesis provides a loop in $\relpower R3$, ie.\ a triangle
(cycle of length 3) in $R$. Let us call its vertices $a$, $b$, $c$. We set
$A' = a^{+R}$, so
$b,c\in A'$. Further we put $R' = R\mid_A = R\cap (A')^2, f' = f\mid_{(A')^{\Ar{f}}}$
and define a $(\Ar{g}-1)$-ary operation $g'$ by
$g'(x_1, \ldots x_{\Ar{g}-1}) = g(x_1, \ldots, x_{\Ar{g}-1}, a)$.

A loop will be found within $A'$ using the induction hypothesis for the set $A'$, operations $f', g'$ and the relation $R'$. It remains to verify all the assumptions. The symmetry of $R'$ is again obvious, the rest is seen as follows.
\begin{enumerate}
\item[(0)] $A'$ is closed under the operations $f'$, $g'$: Assume
  $x_1,\ldots,x_{\Ar{f}}\in A'={a}^{+R}$. Since $R$ produces enough
  absorption wrt.\ $f$ and $g$, we have the following.
\begin{align*}
f'(x_1,\ldots,x_{\Ar{f}}) &= f(x_1,\ldots,x_{\Ar{f}})\in A^{+R},\\
g'(x_1,\ldots,x_{\Ar{g}-1})&=g(x_1\ldots,x_{\Ar{g}-1},a)\in A^{+R}.\\
\end{align*}
\item[(1)] $R'$ contains an odd cycle:
The following tuple is an $R$-cycle by the compatibility of $f$ with $R$.
\begin{align*}
(f(b,b,b,\ldots,b),\quad& f(a,c,c,c,\ldots,c),\cr
f(c,b,b,\ldots,b),\quad& f(b,a,c,c,\ldots,c),\cr
f(c,c,b,\ldots,b),\quad& f(b,b,a,c,\ldots,c),\cr
\centerby{$,\quad$}{$\vdots$}& \cr
f(c,c,\ldots,c,b),\quad& f(b,b,b,\ldots,b.a),\cr
f(c,c,\ldots,c,c),\quad& f(b,b,b,\ldots,b.b))
\end{align*}
All the elements of the cycle lie in $A'$ because $A'$ absorbs $A'\cup\{a\}$ wrt.\ $f$.
\item[(2)] $R'$ is compatible with $f'$: Indeed, $f'$ is just a restriction of $f$ compatible with $R$.
\item[(3)] $R'$ produces enough absorption wrt.\ $f'$: Indeed, $f'$ is just a restriction of $f$ and $R$ produces enough absorption wrt.\ $f$.
%\item $R'$ is symmetric, it is just a restriction of a symmetric relation.
\item[(4)] ``$R'$ is compatible with $g'$-$f'$'':  Consider pairs $(x_1,y_1),\ldots,(x_{\Ar{f}},y_{\Ar{f}})\in R'$. Then
$$
(x_1,y_1),\ldots,(x_{\Ar{g'}},y_{\Ar{g'}}),
  (a,y_{\Ar{g}}),\ldots,(a,y_{\Ar{f}})\in R,
$$
since $R'\subset R$ and $A' = a^{+R}$. By the original assumption (4),
the element $f(y_1,\ldots,y_{\Ar{f}})$ is an $R$-neighbor of
$g'(x_1,\ldots,x_{\Ar{g'}})=g(x_1,\ldots,x_{\Ar{g'-1}},a)$. Moreover, it is an $R'$-neighbor, since both elements are in $A'$ by (0).
\item[(5)] $R'$ produces enough absorption wrt.\ $g'$:
  Consider an element $y\in A'$ and elements
$x_1, \ldots, x_{\Ar{g}-1}$ such that they are all $R'$-neighbors of $y$ with one possible exception $x_i=y$.
%  Also $a$ is a $R$-neighbor of $y$. 
By the original assumption (5) and since $a$ is an $R$-neighbor of $y$, the vertex $z=g(x_1, \ldots, x_{\Ar{g}-1}, a)$ is an $R$-neighbor of $y$. In fact, it is an $R'$-neighbor as $z \in A'$ by (0).
\end{enumerate}
The proof of Lemma~\ref{lem:real_loop} as well as Theorem~\ref{thm:loop} is now concluded.
\end{proof}

\begin{remark}
Ralph McKenzie has found a modification of the proof which does not require the detour through Lemma~\ref{lem:real_loop}. 
He does not keep the original $f$ throughout the proof and instead directly modifies it by plugging $a$ to the last coordinate (in the present proof, this modification is applied to $g$ instead). The new operation is not necessarily compatible with $R$, but it is compatible with $\relpower R3$. This  allows him to produce an arbitrary large clique in $R$, which easily gives the desired loop.
\end{remark}

The following proposition states some sufficient conditions for satisfying the algebraic requirement in Theorem~\ref{thm:loop}. Only the strongest one in item (i) will be used in the next sections.

\begin{proposition} \label{prop:semiabs}
Let $R$ be a symmetric binary relation on a set $A$. Then (i) $\Rightarrow$ (ii) $\Rightarrow$ (iii), and (ii) $\Leftrightarrow$ (ii)'.
\begin{itemize}
\item[(i)] $R$ absorbs $A^2$ wrt.\ an idempotent operation on $A$;
\item[(ii)] $R$ is compatible with a near unanimity operation on $A$;
\item[(ii)'] There exists an operation $f$ compatible with $R$ such that for every $x\in A^{+R}$ the set $x^{+R}$ absorbs $A^{+R}$ wrt.\ $f$.
\item[(iii)] $R$ produces enough absorption wrt.\ a compatible operation $f$ on $A$.
\end{itemize}
\end{proposition}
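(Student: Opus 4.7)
The plan is to establish the equivalence (ii) $\Leftrightarrow$ (ii)' and the chain (i) $\Rightarrow$ (ii)' $\Rightarrow$ (iii), which together give all the claimed implications. The starting observation is that (ii)' $\Rightarrow$ (iii) is essentially by definition: the set $\{x\} \cup x^{+R}$ is contained in $A^{+R}$, so the $f$ witnessing (ii)' is automatically a witness for (iii). For (ii) $\Rightarrow$ (ii)' I would use the NU operation $g$ itself as the witness: given $x \in A^{+R}$ and $u_1, \ldots, u_n$ with $u_j \in x^{+R}$ for $j \neq i$ and $u_i \in A^{+R}$, pick any $v$ with $(v, u_i) \in R$ (which exists since $u_i$ is non-isolated) and apply $g$ coordinatewise to the pairs $(x, u_j) \in R$ for $j \neq i$ together with $(v, u_i) \in R$ at coordinate $i$. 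The NU identity collapses the first coordinate to $x$, so compatibility of $g$ with $R$ gives $g(u_1, \ldots, u_n) \in x^{+R}$.

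For both (i) $\Rightarrow$ (ii) and (ii)' $\Rightarrow$ (ii) I would build a NU $g$ from the given $f$ by the same recipe. After padding $f$ with a dummy argument if necessary (both hypotheses are preserved under this padding), one may assume $f$ has arity $n \geq 3$. Define
\[
g(x_1, \ldots, x_n) = \begin{cases} x & \text{if at least $n-1$ of the $x_j$ equal $x$,} \\ f(x_1, \ldots, x_n) & \text{otherwise.} \end{cases}
\]
For $n \geq 3$ the ``almost-all-equal'' value is unique, so $g$ is well-defined, and it is a NU by construction. To verify compatibility with $R$, take pairs $(r_j, s_j) \in R$ and case-split on whether each of the tuples $(r_1, \ldots, r_n)$, $(s_1, \ldots, s_n)$ has the ``$n-1$ equal'' pattern. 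When both do, one picks a coordinate $j^*$ that is an exception for neither tuple (possible since $n \geq 3$) and reads off $(g(r_1, \ldots, r_n), g(s_1, \ldots, s_n)) = (r_{j^*}, s_{j^*}) \in R$. When neither does, $g$ acts as $f$ and compatibility of $f$ with $R$ (a consequence of either hypothesis) handles the case. The interesting case is when exactly one of them, say $(r_1, \ldots, r_n)$ with $r_j = r$ for $j \neq i$, has the pattern while the other does not: the output is $(r, f(s_1, \ldots, s_n))$ and one must show this lies in $R$. Under (i) one substitutes the arbitrary pair $(r, s_i) \in A^2$ at coordinate $i$ into the $R$-pairs $(r, s_j)$ for $j \neq i$ and invokes absorption of $A^2$ by $R$ wrt $f$. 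Under (ii)' one applies the absorption of $A^{+R}$ by $r^{+R}$ to $(s_1, \ldots, s_n)$ directly, using $s_j \in r^{+R}$ for $j \neq i$ and $s_i \in A^{+R}$ (as it has the $R$-neighbor $r_i$).

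The main subtlety lies in this last case of the compatibility check, where the two different absorption hypotheses (i) and (ii)' have to yield the same conclusion by slightly different routes. The remaining work is bookkeeping: ensuring the arity bound $n \geq 3$ so that the almost-all-equal value is unique and a common exception-free coordinate is always available, and verifying that the arity-padding $\tilde f(x_1, \ldots, x_n, x_{n+1}) := f(x_1, \ldots, x_n)$ preserves both forms of absorption so that the reduction to $n \geq 3$ is legitimate.
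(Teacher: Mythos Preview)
Your proof is correct and essentially follows the paper's approach: the same direct argument for (ii) $\Rightarrow$ (ii)', the trivial (ii)' $\Rightarrow$ (iii), and the same ``force the NU pattern on almost-constant tuples'' construction for (ii)' $\Rightarrow$ (ii). The only organizational difference is that the paper handles (i) $\Rightarrow$ (ii)' directly (a one-line application of idempotency, without building a new operation) and then composes with (ii)' $\Rightarrow$ (ii), whereas you prove (i) $\Rightarrow$ (ii) directly by reusing the NU construction; both routes work, and your write-up actually supplies the compatibility case-analysis that the paper leaves as ``straightforward to verify''.
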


\begin{proof}
The implication $(ii)'\Rightarrow(iii)$ follows from the definitions. We will prove $(i)\Rightarrow(ii)'$,
$(ii)\Rightarrow(ii)'$ and $(ii)'\Rightarrow(ii)$. 

$(i)\Rightarrow(ii)'$. Let $f$ be an $n$-ary idempotent operation on $A$ such that
$R$ absorbs $A^2$ wrt.\ $f$ (recall we abuse the notation and write $f$ also for the corresponding operation on $A^2$). If $a_1, \ldots, a_n \in x^{+R}$ with a possible exception of $a_i \in A$, then $(x,a_j) \in R$ for every $j$ with a possible exception of $j = i$. But then $x = f(x,\ldots,x)$ is $R$-related to $f(a_1,\ldots,a_n)$ as $R$ absorbs $A^2$ and thus $f(a_1, \dots, a_n) \in x^{+R}$.

The proof of $(ii)\Rightarrow(ii)'$ is similar. 
If $a_1, \ldots, a_n \in x^{+R}$ with a possible exception $a_i \in A^{+R}$, then $f(a_1, \ldots, a_n)$ is $R$-related to $x=f(x,\ldots, x,b,x,\ldots,x)$, where $b$ is a neighbor of $a_i$.

$(ii)'\Rightarrow(ii)$. Let $f$ be as in item $(ii)'$ and let $n$ denote its arity. We may assume
$n\geq3$, otherwise we add redundant arguments to $f$. We modify $f$ in the simplest way to obtain an NU operation: define  $u$ by
$$
u(x,y,y,\ldots,y) = u(y,x,y,y,\ldots,y) = \cdots =
u(y,y,\ldots,y,x) = y,
$$
$$
u(x_1,\ldots,x_n) = f(x_1,\ldots,x_n) \quad\text{in all the remaining cases}
$$
It is straightforward to verify that $u$ is compatible with $f$.
% We claim that $R$ is compatible with $u$.
%Consider pairs $(a_1,b_1),\ldots,(a_n,b_n)\in R$ and denote
%$a=u(a_1,\ldots,a_n)$, $b=u(b_1,\ldots,b_n)$.
%We need to show that $(a,b)\in R$. There are 2 possibilities for
%variables $a_i$. Either all of them but one are equal to $a$, or this
%case did not happened, so $a=f(x_1,\ldots, x_n)$. We will denote these
%cases by $a=NU$ or $a = f$ respectively. There are similar two
%possibilities $b = NU$, $b = f$ for variables $b_i$.
%We have 4 possibilities in total.
%\begin{enumerate}
%\item $a = f, b = f$. Then $(a,b)\in R$ directly yields from the compatibility with
%$f$.
%\item $a = NU, b = NU$. Then $(a,b)$ is already one of pairs %$(a_i,b_i)\in R$ since $n\geq 3$.
%\item $a = NU, b = f$. By (ii)', the set
%$A^{+R}$ is absorbed by $a^{+R}$ wrt.\ $f$. So $f(b_1,\ldots,b_n)\in %a^{+R}$ ie.\ $(a,b)\in R$.
%\item $a=f, b=NU$. This case is analogous to the previous one -- by
% assumption (ii)', $f(a_1,\ldots,a_n)\in b^{+R}$.
%\end{enumerate}
\end{proof}

An immediate consequence of  Proposition~\ref{prop:semiabs} and Theorem~\ref{thm:loop} is a loop lemma for NU.

\begin{corollary} (Loop lemma for NU)  \label{cor:NUloop}
If $R$ is a symmetric relation on a set $A$, $R$ contains an odd cycle, and $R$ is compatible with a near unanimity operation  on $A$, then $R$ contains a loop.
\end{corollary}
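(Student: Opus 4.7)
The plan is to deduce this as a direct packaging of the two preceding results: Proposition~\ref{prop:semiabs} converts the NU hypothesis into the ``enough absorption'' form needed by the loop lemma, and Theorem~\ref{thm:loop} then delivers the loop. Concretely, assume $R\subseteq A^2$ is symmetric, contains an odd cycle, and is compatible with a near unanimity operation $u$ on $A$. Then condition (ii) of Proposition~\ref{prop:semiabs} holds with $f=u$, and via the chain (ii)$\Rightarrow$(ii)$'\Rightarrow$(iii) established in the proposition we obtain an operation $f$ on $A$ which is compatible with $R$ and such that $R$ produces enough absorption with respect to $f$.

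Now all hypotheses of Theorem~\ref{thm:loop} are in place: $A$ is a set, $R$ is a symmetric binary relation containing an odd cycle, and $f$ is an operation compatible with $R$ under which $R$ produces enough absorption. The conclusion of Theorem~\ref{thm:loop} is precisely that $R$ contains a loop, and this is what we wanted.

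There is essentially no obstacle in this step, since all the combinatorial and inductive work has been absorbed into Lemma~\ref{lem:real_loop} / Theorem~\ref{thm:loop}, and the translation from ``NU-compatibility'' to ``enough absorption'' is the content of the implication (ii)$\Rightarrow$(iii) of Proposition~\ref{prop:semiabs}. The only thing worth pointing out explicitly is that one can (and in the proof of the proposition does) take the auxiliary $f$ to be a minor modification of the given NU operation $u$, so the whole argument is genuinely a two-line corollary and does not require revisiting the inductive machinery.
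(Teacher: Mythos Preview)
Your proof is correct and matches the paper's own treatment: the corollary is stated there as an immediate consequence of Proposition~\ref{prop:semiabs} and Theorem~\ref{thm:loop}, which is exactly the packaging you give. One tiny remark: in the implication (ii)$\Rightarrow$(ii)$'$ of the proposition the operation $f$ is in fact the NU operation $u$ itself (no modification needed), so you can even take $f=u$ directly when invoking Theorem~\ref{thm:loop}.
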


The proof of the final corollary in this section shows how equational conditions are derived from loop lemmata. 

\begin{corollary} \label{cor:NUSiggers}
Every algebra with a near unanimity term has a 6-ary Siggers term. 
\end{corollary}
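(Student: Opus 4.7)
My plan is to derive the 6-ary Siggers equation from the loop lemma for NU (Corollary~\ref{cor:NUloop}) by the standard ``free algebra plus coordinate encoding'' trick. Let $\str{A}$ be an algebra with a near unanimity term $f$, and let $\str{F}$ be the free algebra of the variety generated by $\str{A}$ over three generators, call them $x,y,z$. I will define a binary relation $R$ on $F$ as the subalgebra of $\str{F}^2$ generated by the six pairs
\[
(x,y),\ (y,x),\ (x,z),\ (z,x),\ (y,z),\ (z,y).
\]
Because $R$ is a subalgebra of $\str{F}^2$, it is automatically compatible with every term operation, and in particular with $f$. Since the set of generators is closed under swapping coordinates, $R$ is symmetric. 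Finally, the pairs $(x,y),(y,z),(z,x)$ lie in $R$, so the walk $x \to y \to z \to x$ is an odd ($R$-)cycle (of length $3$) in $F$.

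Now the hypotheses of Corollary~\ref{cor:NUloop} are satisfied for the set $F$, the symmetric relation $R$, and the NU operation $f$ on $F$ (inherited from $\str{A}$, since $\str{F}$ belongs to the variety generated by $\str{A}$). Therefore $R$ contains a loop: some $w \in F$ with $(w,w) \in R$.

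By definition of $R$ as a generated subalgebra, $(w,w)$ is the value of some 6-ary term operation $t^{\str{F}^2}$ applied coordinate-wise to the six generating pairs, listed above in that order. Reading off the first coordinates gives
\[
w = t^{\str{F}}(x,y,x,z,y,z),
\]
and reading off the second coordinates gives
\[
w = t^{\str{F}}(y,x,z,x,z,y).
\]
Thus $t(x,y,x,z,y,z) \approx t(y,x,z,x,z,y)$ holds in the free algebra $\str{F}$, hence in $\str{A}$, and $t$ is the desired 6-ary Siggers term.

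The plan is essentially bookkeeping once the loop lemma is available; the only delicate point is lining up the generators of $R$ so that the two coordinate projections recover exactly the left- and right-hand sides of the Siggers equation, which is precisely why the six pairs are chosen in the order above.
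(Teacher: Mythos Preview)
Your proof is correct and follows essentially the same route as the paper: define $R\leq\str F^2$ generated by the six non-loop pairs on $\{x,y,z\}$, check symmetry and the odd cycle, apply the NU loop lemma, and read off the Siggers identity from a term producing the loop. The only cosmetic difference is that the paper works in the free algebra over $\{x,y,z\}$ modulo the NU equations, whereas you work in the free algebra of the variety generated by $\str A$; either choice yields the same argument.
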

\begin{proof}
Let $\str F$ be the free algebra over $\{x,y,z\}$ modulo the
NU equations. Let $R$ be the subalgebra of $\str F^2$ generated by the pairs \[(x,y),(y,x),(x,z),(z,x),(y,z),(z,y).\] 
Since the generators form a symmetric graph with an odd cycle, $R$ is symmetric and contains an odd cycle. By definition, $R$ is compatible with an NU operation. Therefore, by Corollary~\ref{cor:NUloop}, $R$ contains a loop $(a,a)$. This loop can be obtained from the generators by a term operation $t^{\str F^2}$, that is,
\[
t^{\str F^2}\left( (x,y), (y,x), (x,z), (z,x), (y,z), (z,y)\right) = (a,a), 
\]
and thus
$t^{\str F}(x,y,x,z,y,z) = t^{\str F}(y,x,z,x,z,y)$. By the definition of free algebras, this means that $t(x,y,x,z,y,z) \approx t(y,x,z,x,z,y)$ in $\str F$.
We have proved that the free algebra on three generators modulo the NU equations has a 6-ary Siggers term and the claim follows.
\end{proof}

\section{Double loop lemma and double loop terms} \label{sect:double_loop}

Armed by Theorem~\ref{thm:loop}, we are ready to prove that a Taylor term implies a specific $12$-ary Taylor term introduced in the next definition.

\begin{definition} \label{def:double_loop}
An $12$-ary term $d$ is a \emph{double loop term} of an idempotent algebra $\str A$ if $\str A$ satisfies the equations
\[
d(xx,xxxx,yyyy,yy) \approx
d(xx,yyyy,xxxx,yy)
\]
\[
d(xy,xxyy,xxyy,xy) \approx
d(yx,xyxy,xyxy,yx)
\]
\end{definition}

The double loop equations can be obtained as follows. Consider a $4 \times 12$ matrix whose columns are all the four-tuples $(a_1,a_2,b_1,b_2) \in \{x,y\}^4$ with $a_1 \neq a_2$ or $b_1 \neq b_2$, and let $\tuple{r}_1,\tuple{r}_2,\tuple{r}_3,\tuple{r}_4$ denote its rows. The double loop equations are then $d(\tuple{r}_1) \approx d(\tuple{r}_2)$ and $d(\tuple{r}_3) \approx d(\tuple{r}_4)$. If the columns are organized lexicographically with $x < y$, we get the equations in Definition~\ref{def:double_loop}. 

Observe that a double loop term is a Taylor term, because the four columns $(a_1,a_2,b_1,b_2)$ with $a_1=a_2$ and $b_1=b_2$ are missing. Conversely, any nontrivial system of two linear equations in one operation symbol and two variables $x,y$ comes from a $4 \times n$ that omit these four columns. Note that each such a system implies a double loop term. Indeed, if some columns are repeated, we can identify variables and get a term whose matrix has  non-repeating columns. Then a double loop term is obtained by introducing dummy variables and reordering the arguments if necessary. 
In this sense, the double loop system of equations is the weakest Taylor system of two equations. 

A double loop term will be derived from a Taylor term using a double loop lemma (Theorem~\ref{thm:double_loop} below), in a similar way in which Siggers term was derived from the NU loop lemma in Corollary \ref{cor:NUSiggers}. In fact, the first equation will be a consequence of idempotence alone, while the second equation will use only the Taylor equations without the idempotency equation.

\begin{theorem} (Double loop lemma) \label{thm:double_loop}
Let  $\str A = (A; t^\str A)$ and $\str B = (B; t^\str B)$ be algebras in the signature consisting of a single $n$-ary operation symbol $t$.
Assume that $\str A$ is generated by $\{x^\str A,y^\str A\}$, $t^\str A$ is idempotent, $\str B$ is generated
by $\{x^\str B,y^\str B\}$ and $t^\str B$ is a Taylor operation. 
Let $Q$ be the subuniverse of $\str A^2\times \str B^2$ generated by
all the 12 quadruples $(a_1,a_2,b_1,b_2)$ with
$a_1,a_2\in\{x^\str A,y^\str A\},\; b_1,b_2\in\{x^\str B,y^\str B\}$, and $a_1\neq a_2$ or $b_1\neq b_2$.
Then there is a double loop in $Q$, ie.\ a quadruple
$(a,a,c,c)\in Q$.
\end{theorem}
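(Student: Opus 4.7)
The plan is to apply the loop lemma, Theorem~\ref{thm:loop}, to a symmetric binary relation derived from $Q$. Identifying $A^2 \times B^2$ with $(A \times B)^2$ via the bijection $(a_1, a_2, b_1, b_2) \leftrightarrow ((a_1, b_1), (a_2, b_2))$, we view $Q$ as a binary relation $R$ on the set $C = A \times B$. Under this identification, the desired double loop $(a, a, c, c) \in Q$ is precisely a loop $((a, c), (a, c)) \in R$, so it suffices to find a loop in $R$.

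Three of the four hypotheses of Theorem~\ref{thm:loop} are established directly. Symmetry of $R$ holds because the generating set of $Q$ is invariant under the involution $(a_1, a_2, b_1, b_2) \mapsto (a_2, a_1, b_2, b_1)$, the condition ``$a_1 \neq a_2$ or $b_1 \neq b_2$'' being unchanged by the swap. Restricted to the four-element set $V = \{x^\str A, y^\str A\} \times \{x^\str B, y^\str B\}$, the generators of $R$ form the complete graph $K_4$ with loops removed; in particular $R$ contains triangles, hence odd cycles. Compatibility of $R$ with the coordinatewise operation $t = t^{\str A \times \str B}$ on $C$ is immediate, since $Q$ is a subuniverse of $\str A^2 \times \str B^2$.

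The main challenge is to secure the remaining hypothesis: that $R$ produces enough absorption with respect to some compatible operation $f$. Using $f = t$ handles the ``easy'' case of absorption at a non-isolated vertex $v = (a, b)$ in which the exceptional input lies in $v^{+R}$; then compatibility together with the idempotency of $t^\str A$ and $t^\str B$ yields that the output of $t$ is $R$-adjacent to $v$. The difficult case is when the exceptional input equals $v$ itself: a naive computation would require the pair $(v, v) \in R$, which is exactly the conclusion we seek, so the argument would be circular.

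To break this circularity I intend to construct $f$ as a star composition of $t$ and to invoke the Taylor equations for $t^\str B$. The key point is that for each coordinate position $i$, a Taylor equation $t^\str B(\vec{s}^i) \approx t^\str B(\vec{t}^i)$ permits ``swapping'' the troublesome $B$-entry at position $i$ for a different one without altering the output in $\str B$, while the idempotency of $t^\str A$ provides the matching flexibility on the $A$-coordinate. Combined, these substitutions allow us to witness the required $R$-edge using tuples that honestly belong to $R$, bypassing the need for $(v, v) \in R$. The hardest part will be designing $f$ so that absorption holds uniformly across every non-isolated vertex and every position, aligning the Taylor identifications with the coordinatewise structure of the star composition. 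Once enough absorption is established, Theorem~\ref{thm:loop} furnishes a loop in $R$, which is the desired double loop $(a, a, c, c) \in Q$.
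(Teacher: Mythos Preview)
Your plan correctly identifies the loop lemma as the engine and handles symmetry, the odd cycle, and compatibility. But the step you flag as ``hardest''---securing enough absorption for $R\subseteq (A\times B)^2$---is not just hard; as written it does not go through. The Taylor identities for $t^{\str B}$ are equations in the two generators $x^{\str B},y^{\str B}$; they say nothing about an arbitrary element $b\in B$. Yet ``enough absorption'' must hold at \emph{every} non-isolated vertex $v=(a,b)$, and for such $v$ the problematic input $w_i=v$ contributes the pair $(a,a,b,b)$, which need not lie in $Q$. A star composition of $t$ does not rescue this: any rewriting you perform via the Taylor equations still only relates evaluations at $x^{\str B},y^{\str B}$, not at a general $b$. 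So the absorption argument works at the four special vertices in $\{x^{\str A},y^{\str A}\}\times\{x^{\str B},y^{\str B}\}$ and nowhere else, and Theorem~\ref{thm:loop} cannot be invoked.

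The paper avoids this obstruction by a genuinely different construction: it does \emph{not} take $R$ on $A\times B$, but builds a relation $R\subseteq A^2$ alone. The $B$-side is handled by recursively defining witness elements $s_i,s'_i\in B$ from the Taylor terms (so $s_{kn+j}=e_j^{\str B}(s_{kn+j-1},s'_{kn+j-1})$, where $e_j$ is the left side of the $j$-th Taylor equation), and declaring $(a_1,a_2)\in R_i$ when both $(a_1,a_2,s_i,s_i)$ and $(a_1,a_2,s'_i,s'_i)$ lie in $Q$. The union $R=\bigcup_i R_i$ then \emph{absorbs all of $A^2$} with respect to $t^{\str A}$: for an exceptional coordinate $j$ one chooses $i\equiv j-1\pmod n$, and the $j$-th Taylor equation lets one replace the bad $(s_i,s'_i)$ pair at position $j$ while idempotency of $t^{\str A}$ keeps the $A$-coordinates fixed. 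This is exactly the ``swap'' you wanted, but it works because the $B$-witnesses are always the specific elements $s_i,s'_i$ rather than an arbitrary $b$. Once $R$ absorbs $A^2$, Proposition~\ref{prop:semiabs} and Theorem~\ref{thm:loop} give a loop $(a,a)\in R$, and the definition of $R$ then produces $(a,a,c,c)\in Q$.
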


\begin{proof}
The majority of the proof is devoted to constructing a binary relation $R \leq \str A^2$ and proving the properties (1) through (4) below.  Afterwards, we will finish the proof by applying Theorem~\ref{thm:loop} to $R$.
\begin{enumerate}
\item[(1)]  $R$ is symmetric,
\item[(2)]  $(x^\str A,y^\str A)\in R$,
\item[(3)]  Whenever $(a_1,a_2)\in R$ there exists  $c\in B$ such that $(a_1,a_2,b,b)\in Q$,
\item[(4)]  $R$ absorbs $\str A^2$ wrt. $t^{\str A}$.
\end{enumerate}

We start by recursively constructing a sequence $s_i$, $s'_i$ of elements of $B$. As the first step, let
$$
s_0 = x^\str B,\quad s'_0 = y^\str B.
$$
Let $j=1,\ldots,n$, let $k$ be a non-negative integer and let $e_j$ be the binary term $e_j(x,y)=t(?,\ldots,?,x,?,\ldots,?)$ that appear, say, on the left hand side of the $j$-th Taylor equation for $t^{\str B}$.  We set
$$
s_{kn+j} = e_j^{\str B}(s_{kn+j-1}, s'_{kn+j-1}),\quad 
s'_{kn+j} = e_j^{\str B}(s'_{kn+j-1}, s_{kn+j-1}),
$$
Note that the definition of $s'$ differs from the definition of $s$ just in swapping the roles of $x^{\str B}$ and $y^{\str B}$.

% ????
%Note that if there is a automorphism $\varphi\colon B\to B$ swapping $\str x^\str B$ and $\str y^\str B$ then $s'_i = \varphi(s_i)$ for every $i$.
Next, we define binary relations on $A$ by
\begin{align*}
R_i &= \{(a_1,a_2)\in A^2; (a_1,a_2,s_i,s_i)\in Q\text{ and
}(a_1,a_2,s'_i,s'_i)\in Q \}\\
R'_i &= \{(a_1,a_2)\in A^2; (a_1,a_2,s_i,s'_i)\in Q\text{ and
}(a_1,a_2,s'_i,s_i)\in Q \}.
\end{align*}
Finally, we set
$$R' = \bigcup R'_i, \quad R = \bigcup R_i.
$$

\begin{claim}
$R_0\subseteq R_1\subseteq R_2\subseteq \cdots$ and $R'_0\subseteq R'_1\subseteq R'_2\subseteq\cdots$.
\end{claim}
To prove the first part, consider any $(a_1,a_2)\in R_i$ where $i = kn+j-1$.
Then
$$(a_1,a_2,s_i,s_i),(a_1,a_2,s'_i,s'_i)\in Q,$$
so, since $e_j^{\str A}$ is idempotent and $Q$ is a subuniverse of $\str A^2 \times \str B^2$,
$$
(a_1,a_2,s_{i+1},s_{i+1}) = (e_j^{\str A}(a_1,a_1),e_j^{\str A}(a_2,a_2),e_j^{\str B}(s_i,s'_i),e_j^{\str B}(s_i,s'_i))\in Q \mbox{ and}
$$
$$
(a_1,a_2,s'_{i+1},s'_{i+1}) = (e_j^{\str A}(a_1,a_1),e_j^{\str A}(a_2,a_2),e_j^{\str B}(s'_i,s_i),e_j^{\str B}(s'_i,s_i))\in Q.
$$
Therefore $(a_1,a_2)\in R_{i+1}$. The second part is analogous.

\begin{claim} $R$ and $R'$ are subuniverses of $\str A^2$.
\end{claim}
To prove that $R$ is compatible with $t^{\str A}$, let  $(a_1,b_1),\ldots,(a_n,b_n)$ be arbitrary pairs from $R$. By the previous claim, all these pairs belong to $R_k$ for some $k$. We set
$$
a=t^\str A(a_1, \ldots, a_n),\quad b=t^\str A(b_1,\ldots, b_n)
$$
and aim to show that $(a,b) \in R_{k+1}$.
Pick $j$ such that $s_{k+1} = e_j(s_k, s'_k)$ and choose
$c_1,\ldots,c_n\in\{s_k,s'_k\}$ in such a way that $t^\str B(c_1,\ldots,c_n) = e_j^{\str B}(s_k, s'_k) = s_{k+1}$. Denoting $c'_i$ the other element of $\{s_k,s'_k\}$, we also have $t^\str B(c'_1,\ldots,c'_n) = e_j^{\str B}(s'_k, s_k) = s'_{k+1}$. By definition of $R_k$, the subuniverse $Q \leq \str A^2 \times \str B^2$ contains the quadruples $(a_i, b_i, c_i, c_i)$, $(a_i, b_i, c'_i, c'_i)$, therefore it also contains the quadruples $(a,b,s_{k+1},s_{k+1})$, $(a,b,s'_{k+1},s'_{k+1})$ obtained by applying $t^{\str A^2 \times \str B^2}$. Thus $(a,b)\in R$, as claimed. The second part is similar.
%Similarly, if $(a_i,b_i)$ are from $R'_k$, then $Q$ contain quadruples $(a_i, b_i, c_i, c'_i)$ and $(a_i, b_i, c'_i, c_i)$. So $Q$ contain $(a,b,s_{k+1},s'_{k+1})$ and $(a,b,s'_{k+1},s_{k+1})$, hence $(a,b)\in R'$.

\begin{claim} $R' = A^2$
\end{claim}

Consider an arbitrary pair $(a_1,a_2) \in A^2$. Since $\str A$ is generated by $x^{\str A}$ and $y^{\str A}$, there exists a binary term operation $s^{\str A}$ such that $s^{\str A}(x^{\str A},y^{\str A}) = a_1$. Note that $(x^{\str A},x^{\str A})$ and $(y^{\str A},x^{\str A})$ are in $R'_0 \subseteq R'$. As $R'$ is compatible with $s^{\str A}$ and $s^{\str A}$ is idempotent, we get $(a_1,x^{\str A}) = (s^{\str A}(x^{\str A},y^{\str A}),s^{\str A}(x^{\str A},x^{\str A}))\in R'$ and, analogously, $(a_1,y^{\str A}) \in R'$. A similar argument now shows that $(a_1,a_2) \in R$, finishing the proof of the claim.

%It follows from the fact that $R'$ contains pairs
%$(\str x^\str A,\str x^\str A)$,
%$(\str x^\str A,\str y^\str A)$,
%$(\str y^\str A,\str x^\str A)$,
%$(\str y^\str A,\str y^\str A)$. By $\{\str x^\str A, \str y^\str A\}$ generating $A$ on first and by idempotency on second coordinate, we get that all pairs $(a_1,\str x^\str A)$, $(a_1,\str y^\str A)$ are elements of $R'$, where $a_1\in A$. Finally, by idempotence on first coordinate and by $\{\str x^\str A, \str y^\str A\}$ generating $A$ on the second one, we get that every pair $(a_1,a_2)\in R'$, where $a_1,a_2\in A$.

We are ready to verify the properties (1) through (4) of the relation $R$.
\begin{enumerate}
\item[(1)]  $R$ is symmetric: The mapping $\psi\colon A^2\times B^2\to A^2\times B^2$ swapping
  the first two coordinates of $A$ is an automorphism of $\str
  A^2\times\str B^2$ which preserves the set of generators of $Q$. Therefore, $\psi$ also preserves $Q$. The claim now follows -- witnesses $w, w'\in Q$ for 
  $(a,b)\in R$ are mapped by $\psi$ to witnesses of $(b,a) \in R$.
\item[(2)]  $(x^\str A,y^\str A)\in R$: Indeed, $(x^\str A,y^\str A)\in R_0 \subseteq R$.
\item[(3)]  Whenever $(a_1,a_2)\in R$ there exists $c\in B$ such that $(a_1,a_2,c,c)\in Q$: This follows from the definition of $R$.
\item[(4)]  $R$ is absorbing $A^2$ wrt.\ $t^{\str A}$: Consider $a_1,a_2,\ldots a_n, b_1,b_2,\ldots b_n\in A$ and $j\in\{1,2,\ldots,n\}$ such that for all $j'\neq j$, $R$ contains $(a_j,b_j)$. We claim that $a = t_A(a_1,a_2,\ldots,a_n)$ is $R$-related to $b = t_A(b_1,b_2,\ldots,b_n)$. Pick $i$ of the form $kn+j-1$ and large enough so that for all $j' = 1,2,\ldots,n$,
  $(a_{j'},b_{j'})\in R'_i$ and if $j'\neq j$ also
  $(a_{j'},b_{j'})\in R_i$. We apply $t^{\str A^2 \times \str B^2}$ to an $n$-tuple of quadruples
  in $Q$ of the form
$$
(a_1,b_1,s^?_i,s^?_i), (a_2,b_2,s^?_i,s^?_i), \ldots,
(a_j,b_j,s_i,s'_i), \ldots, (a_n,b_n,s^?_i, s^?_i),
$$
  where each $s^?_i$ is either $s_i$ or $s'_i$. By the $j$-th Taylor equation for $t^{\str B}$, the question marks can be chosen in such a way that both the third and fourth coordinates of the result are equal to $e_j^{\str B}(s_i,s'_i) = s_{i+1}$. Therefore $(a,b,s_{i+1},s_{i+1})\in Q$. Similarly, we get $(a,b,s'_{i+1},s'_{i+1})\in Q$ and thus
 $(a,b)\in R_{i+1}\subset R$.
\end{enumerate}

To finish the proof we want to apply Theorem~\ref{thm:loop} to the relation $R$.
Since $R$ is symmetric and, by (4) and Proposition~\ref{prop:semiabs}, $R$ produces enough absorption
wrt.\ $t^{\str A}$, it remains to verify that $R$ contains an odd cycle.
But this is a simple consequence of (4) -- the following sequence is an $R$-cycle of length $2n-1$:
\begin{align*}
(t^\str A(x^\str A,x^\str A,\ldots,x^\str A),\quad& t^\str A(x^\str A,y^\str A,y^\str A,\ldots,y^\str A),\cr
t^\str A(y^\str A,x^\str A,\ldots,x^\str A),\quad& t^\str A(x^\str A,x^\str A,y^\str A,\ldots,y^\str A),\cr
\centerby{$,\quad$}{$\vdots$}&\cr
t^\str A(y^\str A,\ldots,y^\str A,x^\str A),\quad& t^\str A(x^\str A,x^\str A,x^\str A\ldots,x^\str A))
\end{align*}
Theorem~\ref{thm:loop} produces a loop $(a,a) \in R$ which in turn implies $(a,a,c,c) \in Q$ by property (3).
\end{proof}

\begin{corollary}
An idempotent algebra is Taylor if and only if it has a double loop term.

Moreover, for every Taylor system of equations in an operation symbol $\{t\}$, there is a term $d$ over the signature $\{t\}$ such that the first double loop equation is a consequence of $t(x,x, \dots,x) \approx x$ and the second double loop equation is a consequence of the given Taylor system. 
\end{corollary}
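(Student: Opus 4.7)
The plan is to deduce this corollary from the double loop lemma (Theorem~\ref{thm:double_loop}), mirroring how Corollary~\ref{cor:NUSiggers} was deduced from Corollary~\ref{cor:NUloop}. The easy direction requires no new work: as was pointed out right after Definition~\ref{def:double_loop}, every double loop term is already a Taylor term (the four ``constant'' columns $(a_1,a_2,b_1,b_2)$ with $a_1=a_2$ and $b_1=b_2$ are absent from the defining matrix), so an idempotent algebra possessing a double loop term is a fortiori Taylor.

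For the nontrivial direction together with the moreover clause, I would fix an $n$-ary operation symbol $t$ together with a Taylor system $\TC T$ in this symbol, and apply Theorem~\ref{thm:double_loop} to two specifically chosen free algebras in the signature $\{t\}$: take $\str A$ to be the free algebra over $\{x^{\str A},y^{\str A}\}$ modulo the single idempotency equation $t(x,\ldots,x)\approx x$, and $\str B$ the free algebra over $\{x^{\str B},y^{\str B}\}$ modulo $\TC T$. Both are two-generated; $t^{\str A}$ is idempotent and $t^{\str B}$ is a Taylor operation, by construction. Theorem~\ref{thm:double_loop} then produces a quadruple $(a,a,c,c)$ in the subuniverse $Q$ of $\str A^2\times \str B^2$ generated by the twelve quadruples, and by the definition of a generated subuniverse, this membership is witnessed by a 12-ary term $d$ over $\{t\}$ (after padding with dummy variables if necessary) applied coordinatewise to the twelve generators.

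Arranging the twelve generators as the columns of the $4\times 12$ matrix with rows $\tuple r_1,\tuple r_2,\tuple r_3,\tuple r_4$ from the discussion after Definition~\ref{def:double_loop}, the existence of $d$ amounts to $d^{\str A}(\tuple r_1^{\str A}) = d^{\str A}(\tuple r_2^{\str A})$ in $\str A$ and $d^{\str B}(\tuple r_3^{\str B}) = d^{\str B}(\tuple r_4^{\str B})$ in $\str B$. By the universal property of $\str A$, the first identity is a consequence of $t(x,\ldots,x)\approx x$ alone; by the universal property of $\str B$, the second is a consequence of $\TC T$. After reordering columns to match the exact form of Definition~\ref{def:double_loop}, these are precisely the two double loop equations, which establishes the moreover clause. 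The main equivalence then follows at once: given any idempotent Taylor algebra, pick a Taylor term $t$ of it, and apply the moreover clause to its Taylor system to obtain $d$, whose interpretation in the algebra is the desired double loop term.

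The substantive work is all inside Theorem~\ref{thm:double_loop}; the present argument is a routine translation between the loop in $Q$ and the equations satisfied by $\str A$ and $\str B$ via their universal properties, so I do not anticipate any real obstacle beyond careful bookkeeping of the column ordering needed to match the exact convention of Definition~\ref{def:double_loop}.
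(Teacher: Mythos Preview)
Your proposal is correct and follows essentially the same approach as the paper: apply Theorem~\ref{thm:double_loop} to the free algebra $\str A$ over two generators modulo idempotency and the free algebra $\str B$ over two generators modulo the given Taylor system, then read off the term $d$ witnessing the double loop $(a,a,c,c)\in Q$ and invoke the universal properties of $\str A$ and $\str B$. The paper's proof is terser but identical in content; your added remarks about padding to arity~12 and matching the column ordering are just the bookkeeping the paper leaves implicit.
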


\begin{proof}
As discussed, a double loop term is a Taylor term, so it is enough to verify the second claim. Its proof is similar to Corollary~\ref{cor:NUSiggers}.
For a given system $\TC{S}$ of Taylor equations in the signature $\{t\}$, let $\str A$ be the free algebra $\str A$ over $\{x^\str A,y^\str A\}$ modulo $\{t(x,x, \ldots,x) \approx x\}$ and let $\str B$
be the free algebra over $\{x^\str B,\str y^\str B\}$ modulo $\TC{S}$.
Finally, let $Q$ be the subuniverse of $\str A^2 \times \str B^2$ described in the statement of Theorem~\ref{thm:double_loop}. Then a term $d$ that computes the double loop $(a,a,c,c)$ from the generators is the required term. \end{proof}

\section{Equivalent conditions} \label{sect:equiv}

We have just proved that every Taylor algebra contains a double loop term.  Now we will introduce further nontrivial strong Maltsev conditions implied by (and thus equivalent to) the existence of a double loop term. 

The \emph{strong double loop} equations are similar to the double loop equations but all four expressions are required to be equal, not just equal in pairs, that is,
\begin{align*}
&\mathrel{\hphantom{\approx}} d(xx,xxxx,yyyy,yy) \\
&\approx d(xx,yyyy,xxxx,yy) \\
&\approx d(xy,xxyy,xxyy,xy) \\
&\approx d(yx,xyxy,xyxy,yx).
\end{align*}

These equations can be further strengthened to the
\emph{weak 3--cube} equations in a $6$-ary symbol $t$:
\begin{align*}
 &\mathrel{\hphantom{\approx}} t(xyy,yxx) \\
 &\approx t(yxy,xyx) \\
 &\approx t(yyx,xxy).
\end{align*}

It was known before that each Taylor system of equations imply a nontrivial 
system of linear equations involving ternary symbols. From the double loop equations we obtain \emph{terminator} equations
$$
\vbox{\halign{\hfil$#$&${}#$\hfil\quad&\hfil$#$&${}#$\hfil\cr
c(x,y,x) &\approx c_1(x,x,y), & c(y,x,x) &\approx c_2(x,x,y), \cr
c_i(x,y,x) &\approx c_{i1}(x,x,y), & c_i(y,x,x) &\approx c_{i2}(x,x,y)\hbox{, where }i\in\{1,2\}, \cr
c_{i1}(x,y,x) &\approx c_{i2}(x,y,x), & c_{i1}(y,x,x) &\approx c_{i2}(y,x,x)\hbox{, where }i\in\{1,2\}.\cr
}}
$$
and from the strong double loop equations we moreover get $c_{11}(y,x,x) \approx c_{22}(x,y,x)$, the \emph{strong terminator} terms. A motivation for this condition comes from infinite domain CSP, see the next section. 

\medskip
\begin{figure}[!ht]
  \def\svgwidth{7cm}
  \centering
     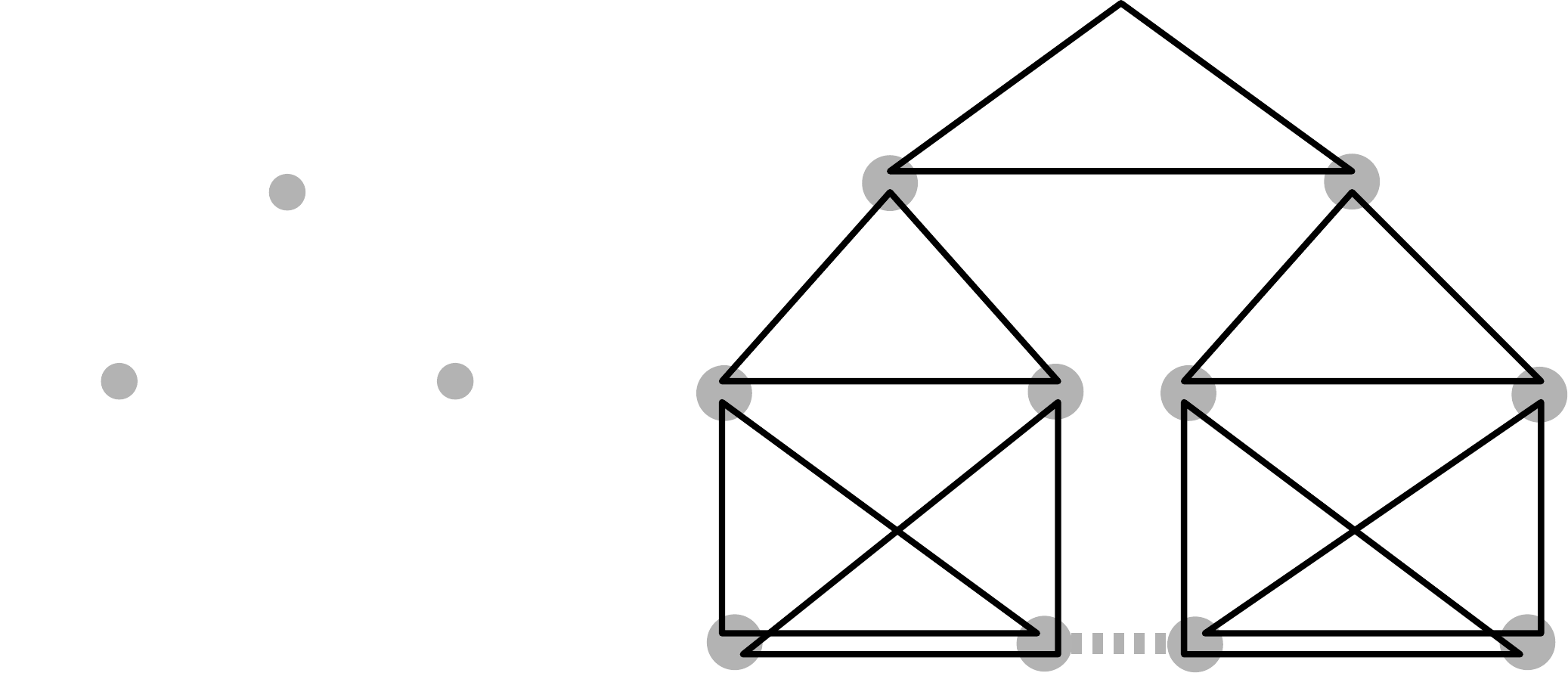
  \caption{Terminator terms; striped connection means the strong variant.}
\end{figure}

\begin{theorem} \label{thm:equiv}
The following are equivalent for every idempotent algebra $\str A$.
\begin{enumerate}
\item[(1)] $\str A$ is a Taylor algebra. 
\item[(2)] $\str A$ has a double loop term.
\item[(3)] $\str A$ has a strong double loop term.
\item[(4)] $\str A$ has a weak 3--cube term.
\item[(5)] There are 4-ary terms $q_1,q_2$ and a ternary term $c$ in $\str A$ satisfying 
$$
\vbox{\halign{\hfil$#$&${}#$\hfil&\hfil${}#$&${}#$\hfil\cr
q_1(x,y,x,y) &\approx q_1(y,x,x,y) &\approx q_2(x,y,x,y) &\approx q_2(y,x,x,y).\cr
q_1(x,x,y,y) &\approx c(x,y,x),    &        q_2(x,x,y,y) &\approx c(y,x,x).\cr
}}
$$
\item[(6)] $\str A$ has terminator terms.
\item[(7)] $\str A$ has strong terminator terms.
\end{enumerate}
\end{theorem}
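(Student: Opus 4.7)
The plan uses the equivalence $(1) \Leftrightarrow (2)$ from the previous corollary as a base and establishes the remaining equivalences in two steps: (a) each of $(3)$--$(7)$ is a nontrivial idempotent equational condition and so implies $(1)$; and (b) condition $(2)$ or $(3)$ can be used, via explicit term manipulations, to produce each of $(3)$--$(7)$. The nontriviality verifications in (a) are routine: one checks in each case that no simultaneous choice of projections satisfies all listed equations, which is immediate from the fact that the matrix of ``column patterns'' for each condition contains no constant column.

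The explicit term manipulations are guided by the discussion immediately preceding the theorem. Specifically, substituting variables in a double loop term $d$ yields the seven ternary terms $c, c_1, c_2, c_{11}, c_{12}, c_{21}, c_{22}$ of $(6)$ and makes the terminator equations follow from the two double loop equations; the same recipe applied to a strong double loop term produces the additional identification $c_{11}(y,x,x) \approx c_{22}(x,y,x)$, giving $(3) \Rightarrow (7)$. Since $(3) \Rightarrow (2)$ and $(7) \Rightarrow (6)$ are trivial, this leaves $(1) \Rightarrow (3)$, $(1) \Rightarrow (4)$, and $(1) \Rightarrow (5)$ to be established.

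For $(3) \Rightarrow (4)$ and $(3) \Rightarrow (5)$ I would attempt direct substitutions in the strong double loop term $d$. For $(4)$, one seeks an identification $\sigma \colon \{1, \dots, 12\} \to \{1, \dots, 6\}$ of coordinates of $d$ with coordinates of $t$ such that three of the four equal $d$-rows $R_1, \dots, R_4$ map to the three $6$-tuples of the weak $3$-cube equations; the $(3)$-equality then immediately yields the $(4)$-equality. A quick check suggests that no single such $\sigma$ works, so I would instead use a star-type composition $d * d$, or nest $d$ inside itself so that enough flexibility is available. The $4$-ary terms $q_1, q_2$ and ternary term $c$ in $(5)$ are extracted from $d$ by analogous substitutions: $q_1, q_2$ come from identifying $d$'s arguments so that its four-row equality collapses the first-line identifications in $(5)$, and $c$ is a ternary restriction chosen to match the values of $q_1(x,x,y,y)$ and $q_2(x,x,y,y)$.

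The main obstacle is $(1) \Rightarrow (3)$: strengthening the pairwise double-loop identifications to a single chain of four equalities. My plan is to re-execute the proof of Theorem~\ref{thm:double_loop} on an enlarged ambient product: in place of the subuniverse $Q \leq \str A^2 \times \str B^2$ generated by $12$ quadruples, work with a subuniverse of $\str A^2 \times \str B^4$ (or more generally a product that records all four rows of the strong double loop matrix simultaneously) generated by the $12$ tuples indexed by the columns of that matrix; introduce parallel sequences of markers in $\str B$ for each of the four rows; and track a binary relation $R \leq \str A^2$ whose loops now witness the full four-fold identification. The proof that $R$ is symmetric, contains an odd cycle, and produces enough absorption with respect to $t^{\str A}$ so that Theorem~\ref{thm:loop} applies should follow the same template as in Theorem~\ref{thm:double_loop}. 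The hardest technical point will be verifying that the odd $R$-cycle argument still goes through with four parallel marker sequences, since the Taylor equations must now be used simultaneously in four copies of $\str B$, which introduces genuinely new bookkeeping relative to the two-row case.
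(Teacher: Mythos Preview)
Your overall cycle of implications matches the paper's, and the easy parts ($(3)\Rightarrow(5)$ by explicit substitution, $(5)\Rightarrow(7)$ by further substitution, nontriviality checks) are handled the same way. The two substantive implications, however, are where your plan diverges and where it is incomplete.

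For $(2)\Rightarrow(3)$ the paper does \emph{not} re-run the double loop lemma on an enlarged product. Your proposed ambient $\str A^2\times\str B^4$ is in fact ill-posed: the four rows of the strong double loop matrix already live in $\str A^2\times\str B^2$, and the obstacle to ``all four equal'' is not a matter of tracking more markers in $\str B$ but of forcing the $\str A$-value and the $\str B$-value to coincide, which has no meaning when $\str A$ and $\str B$ are different free algebras. The paper's argument is instead purely syntactic: writing $e,f$ for the two sides of the double loop equations, one exhibits four explicit substitutions into $d*d*d$ (indexed by the four combinations $e_1/e_2$ and $f_1/f_2$) that all evaluate to $e(f(x,y),f(y,x))$, and observes that for every coordinate at least one of the pairs $(a,b)$ or $(c,d)$ of substitutions disagrees. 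This immediately yields a four-row system with no forbidden column, hence a strong double loop term after identifying variables. No loop lemma is invoked.

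For $(3)\Rightarrow(4)$ you significantly underestimate the difficulty. No direct identification $\sigma$ and no simple star composition $d*d$ suffices; the paper works in the free algebra $\str F$ modulo the strong double loop equations, considers the subuniverse $Q\leq\str F^3$ generated by the six non-constant $\{x,y\}$-triples, and produces a constant triple in $Q$ via a delicate two-step construction. The key step is a surprising ``idempotency strength'' claim: for \emph{any} idempotent binary operation $\cdot$ one can find $x_1,y_1$ with $y_1=\phi(x_1)$ and $((y_1x_1)(x_1y_1),\,x_1,\,x_1)\in Q$, verified by an explicit depth-$4$ expansion. Only after this, and a second application of $d$ to the resulting triples, does the constant appear. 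This is the technical heart of the theorem and is not captured by ``nest $d$ inside itself so that enough flexibility is available.''
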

\begin{proof}
The equivalence $(1)\Leftrightarrow(2)$ is proved in the previous section, trivially 
$(3)\Rightarrow(2)$ and $(7)\Rightarrow(6)$, and (3) or (4) implies (1) since these conditions are nontrivial.

We will prove the implications 
$(2)\Rightarrow(3)$, $(3)\Rightarrow(4)$, $(3)\Rightarrow(5)$, $(5)\Rightarrow(7)$ and that the terminator system is nontrivial, ie.\ $(6)\Rightarrow(1)$.

Let us remark that (3) can be easily deduced from (4) and  that the derivation of strong terminator terms from a strong double loop term, which will be shown, leads as well to terminator terms from a double loop term. However, these implications are not necessary for the proof.
\smallskip

To prove $(2)\Rightarrow(3)$, assume that $\str A$ has a double loop term $d$.
Let us denote by $e(x,y)$ and $f(x,y)$ the terms appearing on (say) the left hand side of the first and second double
loop equations, respectively. Further let $e_1[i]$, where
$i = 1,\ldots,12$, denote the variable at the position $i$ on the left
hand side of the first double loop equation. Similarly, we define $e_2[i]$
for the right hand side and $f_1[i], f_2[i]$ for the second equation.
Finally, we define an operation $\oplus$ on variables $x$, $y$ by
$x\oplus x = y\oplus y = x$ and $x\oplus y = y\oplus x = y$.

Now we describe four ways (a),(b),(c),(d) to substitute the variables of $d*d*d$ by $x$ and $y$ so that the resulting term operation of $\str A$ is equal to $e(f(x,y),f(y,x))^{\str A}$.
Similarly as in Definition~\ref{def:star}, we denote the variables of $d*d*d$ by $x_{i,j,k}$ so that the inner-most $d$'s are applied to $x_{i,j,1}, \ldots, x_{i,j,12}$, etc. The variable $x_{i,j,k}$ is substituted by $x$ or $y$ by the following rules.
$$
(a)\; e_1[j]\oplus f_1[k],\quad
(b)\; e_2[j]\oplus f_1[k],\quad
(c)\; f_1[j]\oplus e_1[i],\quad
(d)\; f_2[j]\oplus e_1[i].
$$
We need to show that in each case, the resulting term evaluates in $\str A$ to $e^{\str A}(f^{\str A}(x,y),f^{\str A}(y,x))$.
In case (a), the inner-most applications of $d^{\str A}$ produces either $f^{\str A}(x,y)$ (if $e_1[j]=x$) or $f^{\str{A}}(y,x)$ (if $e_1[j]=y$). At the middle level, we get $e^{\str A}(f^{\str A}(x,y),f^{\str A}(y,x))$ and the outer-most $d^{\str A}$ does not change the result by idempotency. Case (b) is similar.
In case (c), the inner-most application of $d^{\str A}$ gives $x$ or $y$ by idempotency, the middle level produces $f^{\str A}(x,y)$ or $f^{\str A}(y,x)$ and the outer-most $d^{\str A}$ gives the required result. The last case is, again, analogous.

Observe that for each variable $x_{i,j,k}$, either the substitutions (a) and (b) are different, or (c) and (d). Therefore $t=d*d*d$ satisfies a system of linear equations in two variables of the from $t(\tuple{r}_1) \approx t(\tuple{r}_2) \approx t(\tuple{r}_3) \approx t(\tuple{r}_4)$, where the $\tuple{r}_i$'s are rows of a $4$-row matrix that does not contain the columns $(x,x,x,x)$, $(y,y,y,y)$, $(x,x,y,y)$, $(y,y,x,x)$. Then a strong double loop term can be obtained from $t$ by identification of variables -- see the discussion after Definition~\ref{def:double_loop}.

\smallskip

$(3)\Rightarrow(4)$. Let $\str F$ be the free algebra in the signature $\{d\}$
over $\{x,y\}$ modulo the idempotency and the strong double loop
equations. It suffices to find  a weak 3-cube term in $\str F$ and in order to do that, it is enough to prove that the subuniverse $Q$ of $\str{F}^3$ generated by
$$
\def\vec#1#2#3{\begin{pmatrix} #1\cr #2\cr #3\end{pmatrix}}
\vec xyy
\vec yxy
\vec yyx
\vec yxx
\vec xyx
\vec xxy.
$$
contains a constant triple. (Here it is convenient to write the triples in $Q$ as column vectors.)

Let $\phi$ be the unique automorphism of $\str F$ swapping $x$ and $y$.

\begin{claim}
If $(a,b,c) \in F^3$ is such that $c=\phi(b)$, then $(a,b,c) \in Q$.
\end{claim}

To see this, observe first that $Q$ contains $(a,x,y)$ and $(a,y,x)$
since $(a,x,y)$ can be obtained by applying a binary term operation to
the generators $(x,x,y)$, $(y,x,y)$ and similarly for $(a,y,x)$. Now
any tuple $(a,b,c)$ with $c=\phi(b)$ can be obtained by applying a
term to $(a,x,y)$, $(a,y,x)$.

\begin{claim}
\label{idempotency strength}
Let $\cdot$ be an idempotent term operation of $\str F$. Then there exist $ x_1,  y_1\in F$ such that
\begin{itemize}
\item $y_1= \phi(x_1)$, and
\item the triple $\begin{pmatrix}(y_1x_1)(x_1y_1)\cr x_1\cr x_1\end{pmatrix}$ is in $Q$, where we write $z_1z_2$ instead of $z_1 \cdot z_2$ for brevity.
\end{itemize}
\end{claim}
We set
$$
x_1 = ((xy)x)(y(xy)),\quad
y_1 = ((yx)y)(x(yx))
$$
The first condition is obviously satisfied, the second one is apparent from the following expansion of the triple
$((y_1x_1)(x_1y_1), x_1x_1, x_1)$.
%\smallskip
%\hbox{\resizebox{\hsize}{!}{
$$
\vbox{\def\c#1{\hfil{#1}\hfil}
 \halign{$[((\c{#}$&$\c{#})$&$\c{#})$&\kern.3em
          $(\c{#}$&$(\c{#}$&$\c{#}$))]&\kern.3em
          $[((\c{#}$&$\c{#})$&$\c{#})$&\kern.3em
          $(\c{#}$&$(\c{#}$&$\c{#}))]$\cr
     (yx) &   y  & (x(yx)) & ((xy)x) & y & (xy) & (xy) & x & (y(xy)) & ((yx)y) &   x  & (yx) \cr
     (xy) & (xy) &    x    &    y    & x &  y   &  x   & y &   x     &    y    & (xy) & (xy) \cr
      x   &   x  &    y    &    x    & x &  x   &  y   & y &   y     &    x    &  y   &   y  \cr
  }}
$$
%}}

\begin{remark}
Observe that the claim only requires the idempotency of $\cdot$. It was surprising for us that the simple idempotency equation is actually quite strong. Is there a more conceptual generalization?
\end{remark}

Returning back to the proof of (3) $\Rightarrow$ (4), we apply the claim to the binary operation $xy = d^{\str F}(xx,xxxx,yyyy,yy)$ and obtain $x_1, y_1$ as in the statement.
Let $x_2 = (x_1y_1)(y_1x_1)$, $y_2 = \phi(x_2) = (y_1x_1)(x_1y_1)$.
We claim that the following six triples are in $Q$.

$$
\def\vec#1#2#3{\begin{pmatrix}#1_2\cr#2_1\cr#3_1\end{pmatrix}}
\vec xyy
\vec yxy
\vec yyx
\vec yxx
\vec xyx
\vec xxy.
$$
Indeed, the forth triple is in $Q$ by the claim. The first triple is in $Q$ since it is the $\phi$-image of the forth one, and $Q$ is compatible with $\phi$ (the generators are). For the remaining triples, we can use the first claim.

Finally, let
$z = d^{\str F}(y_2y_2,x_2x_2x_2y_2,
x_2x_2x_2y_2,x_2x_2)$.
The following triples are in $Q$.
$$
  \vbox{\def\c#1{\hfil#1\hfil}
  \halign{$\c{#}={}$&$d(\c{#}$&$\c{#},{}$&$\c{#}$&$\c{#}$&$\c{#}$&$\c{#},{}$&%
          $\c{#}$&$\c{#}$&$\c{#}$&$\c{#},{}$&$\c{#}$&$\c{#})$\cr
         z              & y_2&y_2 & x_2&x_2&x_2&y_2 & x_2&x_2&x_2&y_2 & x_2&x_2 \cr
    (x_1y_1)  & x_1&x_1 & x_1&x_1&x_1&x_1 & y_1&y_1&y_1&y_1 & y_1&y_1 \cr
    (x_1y_1)  & x_1&x_1 & y_1&y_1&y_1&y_1 & x_1&x_1&x_1&x_1 & y_1&y_1 \cr
  }}
$$
$$
  \vbox{\def\c#1{\hfil#1\hfil}
  \halign{$\c{#}={}$&$d(\c{#}$&$\c{#},{}$&$\c{#}$&$\c{#}$&$\c{#}$&$\c{#},{}$&%
          $\c{#}$&$\c{#}$&$\c{#}$&$\c{#},{}$&$\c{#}$&$\c{#})$\cr
         z              & y_2&y_2 & x_2&x_2&x_2&y_2 & x_2&x_2&x_2&y_2 & x_2&x_2 \cr
    (y_1x_1)  & y_1&x_1 & y_1&y_1&x_1&x_1 & y_1&y_1&x_1&x_1 & y_1&x_1 \cr
    (y_1x_1)  & x_1&y_1 & y_1&x_1&y_1&x_1 & y_1&x_1&y_1&x_1 & x_1&y_1 \cr
  }}
$$

Since $z$ is generated by $(x_1y_1)$ and $(y_1x_1)$, then $(z,z,z) \in Q$.

\smallskip

\noindent $(3)\Rightarrow(5)$
Set
\begin{align*}
c(x,y,z)     &= d(y,y,x,z,z,x,x,z,z,x,y,y),\\
q_1(u,v,x,y) &= d(x,x,u,u,u,u,v,v,v,v,y,y),\\
q_2(u,v,x,y) &= d(u,v,x,u,v,y,x,u,v,y,u,v).\\
\end{align*}
The verification of the equations is straightforward.

\smallskip

\noindent $(5)\Rightarrow(7)$
Term $c$ is the same, further put

$$
\vbox{\halign{\hfil$#$&${}#$\hfil\quad&\hfil$#$&${}#$\hfil\cr
   c_1(x,y,z) &= q_1(x,y,z,z), &    c_2(x,y,z) &= q_2(x,y,z,z),\cr
c_{11}(x,y,z) &= q_1(x,z,y,x), & c_{21}(y,x,z) &= q_2(x,z,y,x),\cr
c_{12}(x,y,z) &= q_1(z,x,y,x), & c_{22}(y,x,z) &= q_2(z,x,y,x).\cr
}}
$$
\smallskip

\noindent $(6)\Rightarrow(1)$
Suppose for a contradiction that each term symbol in the terminator system
represents a projection. Let $\pi_1, \pi_2, \pi_3$ denote the
ternary projection to the first, second, third coordinate respectively. Take $i\in\{1,2\}$. 
If $c_i = \pi_1$, we get
$c_{i2} = \pi_3$
by $y \approx c_i(y,x,x) \approx c_{i2}(x,x,y)$. But then $c_{i1}$ can not be equal to $\pi_1,\pi_2$, nor $\pi_3$
because of the equations
\begin{align*}
 c_{i1}(y,x,x) &\approx c_{i2}(y,x,x) = x,\cr
 c_{i1}(x,y,x) &\approx c_{i2}(x,y,x) = x,\cr
x = c_i(x,y,x) &\approx c_{i1}(x,x,y).\cr
\end{align*}
Therefore $c_i\neq \pi_1$.

Analogously, $c_i \neq \pi_2$, therefore $c_1=c_2=\pi_3$. Finally, the equations
$$c(x,y,x) \approx c_1(x,x,y) = y,\quad c(y,x,x) \approx c_2(x,x,y) = y$$
cannot be satisfied by a projection.
\end{proof}

\begin{figure}[!ht]
  \centering
   \includegraphics[width=8cm]{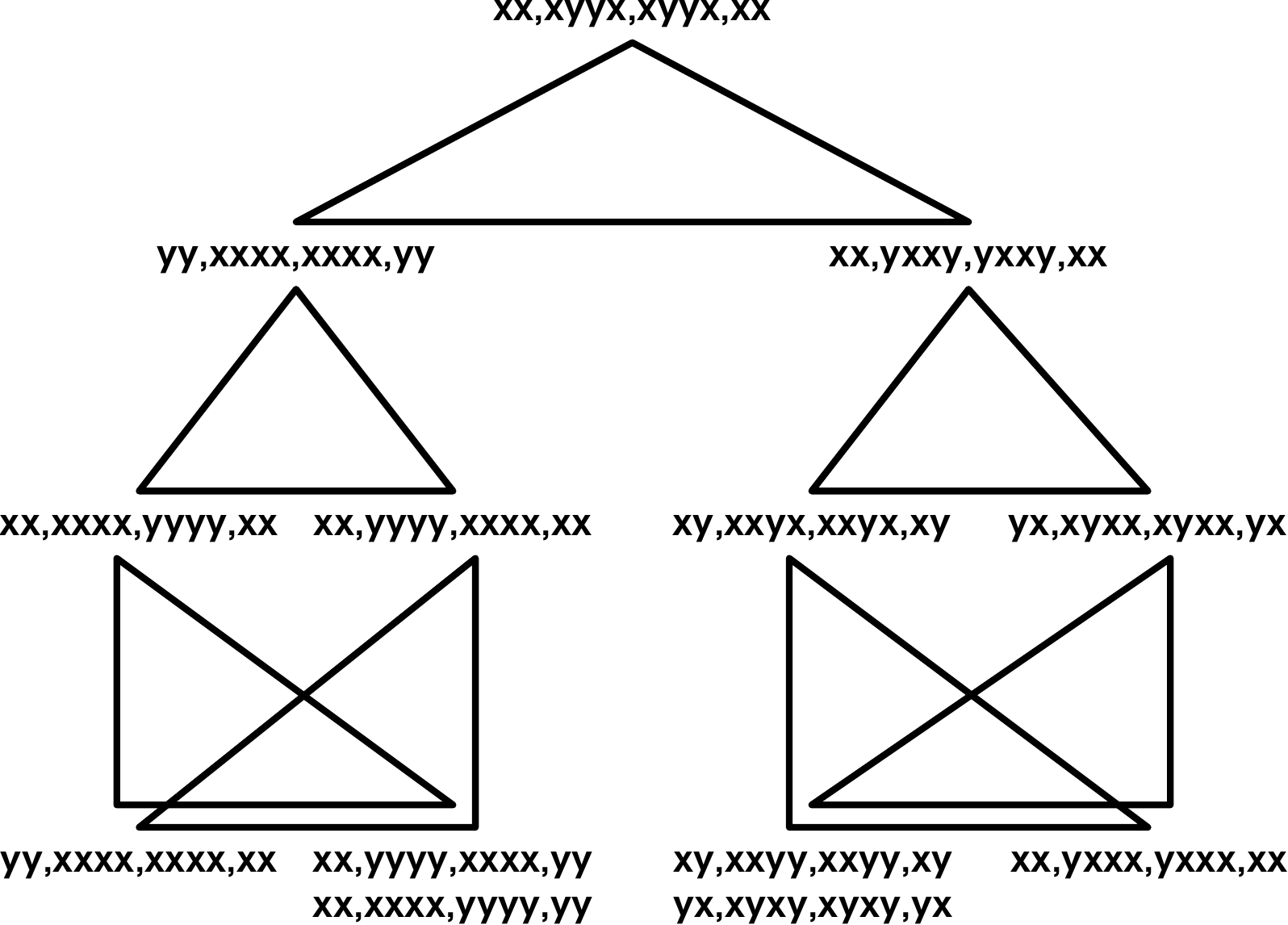}
  \caption{The derivation of terminator terms from a double loop term.}
\end{figure}

\section{Open problems} \label{sect:open}

The first area of problems is to what extend can the conditions in Theorem~\ref{thm:equiv} be further improved. P. Dapi\'c and V. Uljarevi\'c
\cite{TaylorestTerm} were able to remove 3 out of 12 columns in the double loop equations. 
The weak 3--cube term effectively removes 6 out of 12 columns, but it is not easily seen how to derive either of the two conditions from the other one. Is there a common generalization? 
A particularly interesting is the question, whether it is possible to further improve the weak 3--cube term to the so called weak 3--edge term~\cite{OptimalStrong}.
\begin{problem} Does every idempotent Taylor algebra have
  4-ary term $e$ satisfying the equations
\[
e(y,y,x,x) \approx e(y,x,y,x) \approx e(x,x,x,y)?
\]
\end{problem}
\noindent
Note that the existence of such a term follows easily from the $4$-ary Siggers term, or a 3--WNU term, or a Maltsev term.

By Theorem~\ref{thm:kazda}, single nontrivial equations do not characterize Taylor algebras. It could still be an interesting problem to compare their strength. In particular, we ask the following question.
\begin{problem}
Does every (idempotent) algebra with a $6$-ary Siggers term have a $4$-ary Siggers term?
\end{problem}

Our results hinge on the idempotency, and necessarily so by the discussion in the introduction. However, restricted classes of non-idempotent infinite algebras can posses  weakest (at least in some sense) nontrivial conditions. Of particular importance for the infinite domain CSPs is the class of closed oligomorphic algebras and its subclasses  (see eg.~\cite{BartoPinsker} for background). The following question is of interest in this context.
\begin{problem}
Let $\str A$ be a closed oligomorphic algebra that satisfies a nontrivial \emph{linear} equational condition. 
Does $\str A$ have necessarily terminator terms?
\end{problem}
\noindent
A simple example of a closed oligomorphic algebra which does not have a double loop term but has a terminator term is the algebra whose universe is a countably infinite set and the basic operations are all the injective operations. Let us also remark that the linearity assumption cannot be omitted in the problem. This will be shown in a forthcoming paper.

% is there a weakest linear?????

Our final questions are whether the NU loop lemma holds under weaker structural or algebraic assumptions, like in the finite case. An optimistic structural weakening is the following.
\begin{problem}
Let $A$ be a set,
$R\subset A^2$ a binary relation containing a finite smooth directed graph of algebraic length one (see the remarks below Theorem~\ref{thm:loop_HN} for definitions), and $f$ an NU operation on $A$ compatible with $R$.
Does $R$ necessarily contain a loop?
\end{problem}
\noindent
With a help of computer, a positive answer to this problem was verified in the case that $f$ is ternary and the finite smooth subgraph of algebraic length one has at most 4 vertices.
Also note that the assumption cannot be further weakened to ``$R$ is a smooth directed graph of algebraic length one''; a simple counterexample is the strict linear order on integers which is compatible with the median operation. 

Recall that the compatibility with an NU term cannot be weakened to the compatibility with a Taylor term, again, by Theorem~\ref{thm:kazda}. However, we do not have a counterexample to, eg., the following ``local'' version.
\begin{problem}
Let $A$ be a set,
$R\subset A^2$ a binary symmetric relation containing an odd cycle $(a_1, \dots, a_l)$, and $f$ an idempotent operation on $A$ compatible with $R$ such that, $a_1^{+R}$ absorbs $\{a_1\}$ wrt.\ $f$.
Does $R$ necessarily contain a loop?
\end{problem}
\noindent
We can prove the existence of a loop if the length $l$ of the cycle equals three.
A suitable local version of the loop lemma could help in proving a local version of the double loop lemma and this in turn may help in addressing some problems from~\cite{BartoPinsker}.
%Assuming NU4 and $l=3$ there is a loop on the element
%$$
%f(f(a_3,a_1,a_2,a_2),f(a_1,a_2,a_3,a_3),f(a_1,a_1,a_2,a_3),f(a_1,a_1,a_2,a_3).
%$$
%Still we do not know the answer for NU3 and $l=5$.
%\endremark

\section*{Acknowledgement}
The author gratefully acknowledges the support of the Grant Agency of the Czech Republic,
grant GA\v CR 13-01832S,
and also wishes to express his thanks to Libor Barto for his help with the article.

\bibliographystyle{plain}
\bibliography{bib-file.bib}

\end{document}